\documentclass[12pt]{amsart}
\addtolength{\textwidth}{7pc}
\addtolength{\textheight}{4.5pc}
\calclayout

% TITLE ------------------------------------------------------------------------------------------
\title[On the Schwarz Lemma for the Bergman metric]{On the Schwarz Lemma for Bergman metrics of bounded domains}

\author[Hoseob Seo]{Hoseob Seo}
\author[Sungmin Yoo]{Sungmin Yoo}
\author[Jihun Yum]{Jihun Yum}
\address{Research Institute of Mathematics, Seoul National University, 1 Gwanak-ro, Gwanak-gu, Seoul, 08826, Republic of Korea }
\email{hoseobseo@snu.ac.kr}
\address{Department of Mathematics, Incheon National University, (Songdo-dong) 119 Academy-ro Yeonsu-gu, Incheon, 22012, Republic of Korea}
\email{sunminyoo.math@inu.ac.kr}
\address{Department of Mathematics Education and Institute of Pure and Applied Mathematics, Jeonbuk National University, 567, Baekje-daero, Deokjin-gu, Jeonju-si, Jeonbuk-do, 54896, Republic of Korea}
\email{jihun0224@jbnu.ac.kr}
\date{\today}
%\thanks{}
%\subjclass[2010]{}
%\keywords{}

% PACKAGE -----------------------------------------------------------------------------------
\usepackage{amsthm,amssymb,latexsym}
\usepackage{comment}

\usepackage[abbrev]{amsrefs}
\usepackage{hyperref}

\usepackage[autostyle]{csquotes}
\usepackage{xcolor}
\usepackage[noadjust]{cite}

\usepackage{chemarrow}

\usepackage{kotex}
\usepackage{tikz-cd}
%\usepackage{biblatex}

% NEWCOMMAND ------------------------------------------------------------------------------
\newcommand{\RR}{\mathbb{R}}		% the real number R
 		% Siegel Domain H
 			% standard complex structure
 		% unit vector at the origin in the unit disc
 		% non-negative descending chain C
 		% Integer Z
\newcommand{\CC}{\mathbb{C}} 		% Complex C
\newcommand{\DD}{\Delta} 		    % unit disc D
 		% Natural number N
\newcommand{\PP}{\mathbb{P}} 		% projective space}
 		% the Real part Re
 		% the Imaginary part Im
 	% support of a function
 	% the Automorphism group
 		% the Unitary group U
 	% Riemannian distance dist
		 	% the imaginary number i
 	% Annihilator Ann
 		% C^n
 		% a unit ball in C^n
\newcommand{\BB}{\mathbb{B}} 		% a unit ball
 			% imaginary number
% group action
 			% small omega
\renewcommand{\O}{\Omega}			% domain
 	% Levi-form
  	% Lie derivative
  	% holomorphic function space
 	% small o notation
    	% Null-space
		% harmonic space

\newcommand{\Prob}{\mathcal{P}}     % Probability space
\newcommand{\Bergman}{\mathcal{B}}  % Bergman kernel
      % Diastasis function
\newcommand{\Poisson}{P}            % Poisson-Bergman kernel

\newcommand{\Exp}{\operatorname{E}}
 
\newcommand{\Var}{\operatorname{Var}} 
        % Random vector X
        % Random vector Y

\newcommand{\norm}[1]{\left\|#1\right\|}                % norm
   % inner product

\newcommand{\p}{\partial}

\newcommand{\ov}[1]{\overline{#1}}

% THEOREMS -------------------------------------------------------

% \newtheorem*{Question}{Question}

\theoremstyle{plain}
\newtheorem{thm}{Theorem}[section]
\newtheorem{lem}[thm]{Lemma}
\newtheorem{prop}[thm]{Proposition}
\newtheorem{cor}[thm]{Corollary}
\newtheorem{ex}[thm]{Example}
\newtheorem{mainthm}{Theorem}

\theoremstyle{definition}
\newtheorem{defn}[thm]{Definition}

\theoremstyle{remark}
\newtheorem{rmk}[thm]{Remark}

\theoremstyle{property}

\numberwithin{equation}{section}

\begin{document}
	
\maketitle

\begin{abstract}
    We present a new Schwarz Lemma for bounded domains with Bergman metrics.
    The key ingredient of our proof is the Cauchy-Schwarz inequality from probability theory.
\end{abstract}

\section{Introduction}

    The classical Schwarz-Pick Lemma in one complex variable states that, for a holomorphic function $f: \DD \rightarrow \DD$ between unit discs $\DD \subset \CC$, 
    \begin{align*}
        \frac{|f'(z)|}{1-|f(z)|^2} \le \frac{1}{1-|z|^2}
    \end{align*}
    for all $z \in \DD$. Moreover, the equality holds at any point of $\DD$ if and only if $f$ is a biholomorphism. It is equivalent to say that geometrically $ f^* g_{\DD} \le g_{\DD} $, 
    where $g_{\DD}$ denotes the Poincar\'e metric (or the Bergman metric) of $\DD$. 
    This result has undergone several significant generalizations over the decades. Ahlfors(\cite{ahlfors1938extension}) first extended the lemma to Riemann surfaces by considering codomains with negative Gaussian curvature. This was later expanded to higher dimensions by Chern(\cite{chern1968holomorphic}) and Lu(\cite{lu1968holomorphic}), independently, who established the lemma for holomorphic maps from the unit ball to K\"ahler manifolds with negative holomorphic bisectional curvature. A landmark result was achieved by Yau(\cite{yau1978general}), who proved that for any holomorphic map $f: M_1 \to M_2$ between a complete K\"ahler manifold $(M_1, g_1)$ and a Hermitian manifold $(M_2, g_2)$, the inequality 
    $$f^*g_2 \leq \frac{K_1}{K_2}g_1$$
    holds, provided that the Ricci curvature of $M_1$ is bounded below by $K_1$ and the holomorphic bisectional curvature of $M_2$ is bounded above by $K_2 < 0$. Subsequently, Royden(\cite{royden1980ahlfors}) relaxed the conditions on the codomain $(M_2, g_2)$. He demonstrated that the inequality remains valid even without the holomorphic bisectional curvature condition, requiring only that the holomorphic sectional curvature of $g_2$ be bounded above by a negative constant. 
    Beyond its theoretical evolution, the Schwarz Lemma finds numerous applications across complex geometry. \\
    %It is central to proving Liouville-type theorems regarding the non-existence of non-constant bounded holomorphic functions. Additionally, it is used to demonstrate the existence of the Kobayashi and Carath\'eodory metrics.\\

    In this article, we introduce a new Schwarz Lemma for the bounded domains with the Bergman metrics. 
    The main theorem is the following. 

    \begin{mainthm}[Theorem~\ref{thm: Schwarz lemma for Bergman metric}] \label{mainthm: thm A}
        Let $(\O_1, g_{B_1})$ and $(\O_2, g_{B_2})$ be bounded domains in $\CC^n$ and $\CC^m$ with Bergman metrics, respectively.
        Suppose that there exists a constant $C > 0$ such that 
        \begin{align} \label{1.1}
            \sup_{w,\zeta \in \O_2} |\p_w \log P_2 (w, \zeta)|^2_{g_{B_2}} \le C,
        \end{align}
        where $P_2$ is the density function of the Bergman statistical model (Section~\ref{subsec: Bergman statistical model}) for $\O_2$. Then,
        for any holomorphic map $f:\O_1 \rightarrow \O_2$, 
        $$
            f^*g_{B_2} \le C g_{B_1}.
        $$
    \end{mainthm}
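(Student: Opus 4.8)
The plan is to argue pointwise: fix $z_0 \in \O_1$ and a tangent vector $\xi$ at $z_0$, set $w_0 := f(z_0)$ and $\eta := df_{z_0}(\xi)$, and prove $g_{B_2}(\eta,\eta) \le C\, g_{B_1}(\xi,\xi)$; since $z_0,\xi$ are arbitrary this gives the asserted inequality of Hermitian forms. The starting point is the probabilistic description of the Bergman metric from Section~\ref{subsec: Bergman statistical model}: writing $K_i$ for the Bergman kernel of $\O_i$ and $\sigma(u) := \p_\xi \log P_1(u,z)\big|_{z=z_0}$ for the (parameter) score of the Bergman statistical model of $\O_1$, one has $\Exp_{P_1(\cdot,z_0)}[\sigma]=0$ and $\Exp_{P_1(\cdot,z_0)}\big[|\sigma|^2\big] = g_{B_1}(\xi,\xi)$, i.e. the Fisher information equals the Bergman metric. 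This reduces matters to manufacturing a single scalar random variable on the probability space $(\O_1,\, P_1(\cdot,z_0)\,dV)$ to feed into the Cauchy--Schwarz inequality.

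The key device is a carefully chosen \emph{holomorphic} statistic. On $\O_2$ I would introduce the holomorphic function $H(w) := \p^{(2)}_{\bar\eta}\log K_2(w,a)\big|_{a=w_0}$ (the antiholomorphic $\bar\eta$-derivative in the second slot, frozen at $a=w_0$) and pull it back by $f$ to $T := H\circ f \in \holo(\O_1)$. Two facts drive the argument. First, because $T$ is holomorphic and $P_1(\cdot,z)$ is the normalized squared modulus of the reproducing kernel, the mean value (reproducing) property gives $\Exp_{P_1(\cdot,z)}[T] = T(z)$; differentiating in $\xi$ and using the chain rule with $\eta = df(\xi)$ yields
$$\p_\xi \Exp_{P_1(\cdot,z)}[T]\big|_{z_0} = (\p_\xi T)(z_0) = (\p_\eta H)(w_0) = \big(\p^{(1)}_\eta\p^{(2)}_{\bar\eta}\log K_2\big)(w_0,w_0) = g_{B_2}(\eta,\eta),$$
since $\log K_2(w,w)$ is the Bergman potential. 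Second, a short computation identifies $|H(w)|$ with $\big|\p^{(1)}_\eta\log K_2(a,w)\big|_{a=w_0}\big| = \big|\langle\, \p_a\log P_2(a,w)\big|_{a=w_0},\, \eta\rangle\big|$; pairing this covector with $\eta$ and invoking hypothesis~\eqref{1.1}, which bounds its $g_{B_2}$-dual norm by $\sqrt{C}$, produces the pointwise estimate $|T(u)|^2 = |H(f(u))|^2 \le C\, g_{B_2}(\eta,\eta)$ for every $u\in\O_1$. In particular $T$ is bounded, so $T$ and $T\,K_1(\cdot,z_0)$ lie in $A^2(\O_1)$ and the reproducing manipulations above are legitimate, and $\Var_{P_1(\cdot,z_0)}(T) \le C\, g_{B_2}(\eta,\eta)$.

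Finally I would apply the probabilistic Cauchy--Schwarz inequality. Writing the derivative of the mean as a covariance against the score, $\p_\xi\Exp_{P_1(\cdot,z)}[T]\big|_{z_0} = \Exp_{P_1(\cdot,z_0)}\big[(T-\Exp T)\,\sigma\big]$, the two facts above combine into
$$g_{B_2}(\eta,\eta)^2 = \big|\Exp_{P_1(\cdot,z_0)}[(T-\Exp T)\sigma]\big|^2 \le \Var_{P_1(\cdot,z_0)}(T)\cdot \Exp_{P_1(\cdot,z_0)}\big[|\sigma|^2\big] \le C\, g_{B_2}(\eta,\eta)\, g_{B_1}(\xi,\xi).$$
If $g_{B_2}(\eta,\eta)=0$ the claim is trivial; otherwise dividing by $g_{B_2}(\eta,\eta)$ gives $g_{B_2}(\eta,\eta)\le C\,g_{B_1}(\xi,\xi)$. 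The part I expect to require the most care is not any single estimate but the bookkeeping that makes the argument close on itself: one must select the holomorphic statistic $T$ and track the holomorphic/antiholomorphic conjugations precisely so that (i) the $\xi$-derivative of its mean reproduces \emph{exactly} $g_{B_2}(\eta,\eta)$ and (ii) hypothesis~\eqref{1.1} bounds its variance by the \emph{same} quantity $C\,g_{B_2}(\eta,\eta)$. It is this self-referential cancellation that upgrades a Cauchy--Schwarz estimate into the sharp linear bound; a secondary but necessary point is to confirm, via the boundedness furnished by~\eqref{1.1}, that $T$ is genuinely $L^2$ against the relevant kernels so that the mean value and reproducing identities are justified.
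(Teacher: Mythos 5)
Your proposal is, at bottom, the paper's own argument in different packaging. Your score $\sigma$ is the paper's $Z=\p_X\log P_1(z,\cdot)$; your statistic $T=H\circ f$ is, up to complex conjugation and an additive constant (see below), the paper's $W=\p_X\log P_2(f(z),f(\cdot))$; and your chain $g_{B_2}(\eta,\eta)^2=\bigl|\Exp[(T-\Exp T)\sigma]\bigr|^2\le \Var(T)\,g_{B_1}(\xi,\xi)\le C\,g_{B_2}(\eta,\eta)\,g_{B_1}(\xi,\xi)$ is exactly the paper's combination of Theorem~\ref{thm: applying Cauchy-Schwarz inequality} (where $\Cov[Z,W]=f^*g_{B_2}(X,X)$ feeds into Cauchy--Schwarz) with the pointwise dual-norm estimate in the proof of Theorem~\ref{thm: Schwarz lemma for Bergman metric} (which likewise bounds $|W|^2\le C\,g_{B_2}(df(X),df(X))$ pointwise before integrating). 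The one real difference is how the covariance identity is obtained. The paper computes $\int_{\O_1}\p_{\ov X}\log\Bergman_2(f(\xi),f(z))\,\p_X P_1\,dV = f^*g_{B_2}(X,X)$ by hand with Bergman's special orthonormal bases (Lemma~\ref{lem:proof in main thm}), \emph{precisely to avoid} differentiating under the integral sign; you instead differentiate the exact reproducing identity $\Exp_{P_1(z,\cdot)}[T]=T(z)$ in $z$ and then rewrite the derivative of the mean as a covariance against the score. That rewriting, $\p_\xi\Exp_{P_1(\cdot,z)}[T]\big|_{z_0}=\Exp[(T-\Exp T)\sigma]$, is itself an interchange of $\p_X$ with $\int_{\O_1}$ --- the very step the paper's Remark after Lemma~\ref{lem:proof in main thm} flags as nontrivial because $\Bergman_1(z,\cdot)$ is unbounded --- and you never justify it; your closing caveat about $T$ being $L^2$ covers the reproducing identities but not this interchange. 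It is repairable in your setting, since (\ref{1.1}) makes $T$ bounded: either rerun the paper's basis computation (expand $Ts_0=\sum_j a_j s_j$, note $a_0=T(z)$, and read off $\int_{\O_1} T\,\p_X P_1\,dV=\p_X T(z)$), or invoke anti-holomorphy of $z\mapsto K_{1,z}\in A^2(\O_1)$ as a Hilbert-space-valued map. As written, though, this is a gap.

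A second, minor slip: the identification $|H(w)|=\bigl|\langle\,\p_a\log P_2(a,w)|_{a=w_0},\eta\rangle\bigr|$ is off by an additive constant. Indeed $\ov{H(w)}=\p^{(1)}_\eta\log\Bergman_2(a,w)\big|_{a=w_0}=\p_\eta\log P_2(a,w)\big|_{a=w_0}+c_0$ with $c_0=\p_\eta\log\Bergman_2(a,a)\big|_{a=w_0}$, because $\log P_2(a,w)=\log\Bergman_2(a,w)+\log\ov{\Bergman_2(a,w)}-\log\Bergman_2(a,a)$ and $\p_a$ kills the middle term but not the diagonal one. So your pointwise bound should read $|T(u)-\ov{c_0}|^2\le C\,g_{B_2}(\eta,\eta)$; this does no damage, since variance is translation-invariant (so $\Var(T)\le C\,g_{B_2}(\eta,\eta)$ survives) and $T$ remains bounded, which is what the $A^2$ membership and the reproducing step need. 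Note also that $H$ is a priori only meromorphic, with poles on $\{w:\Bergman_2(w,w_0)=0\}$; your boundedness-plus-removable-singularity remark handles the pullback of this set, which is proper since $\Bergman_2(w_0,w_0)>0$. With these two repairs your proof is correct and essentially coincides with the paper's; what the paper's two-stage structure (proving Theorem~\ref{thm: applying Cauchy-Schwarz inequality} first, without hypothesis (\ref{1.1})) buys in addition is the equality analysis of Theorem~\ref{thm: when the equality holds}, which your direct route does not address --- nor needs to for Theorem A.
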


    A key advantage of Theorem~\ref{mainthm: thm A} over existing results by Yau and Royden is that it imposes no requirements on the domain $\Omega_1$ of the holomorphic map $f: \Omega_1 \rightarrow \Omega_2$; specifically, it requires neither completeness nor a lower curvature bound for $g_{B_1}$.
    As for the condition on the codomain $\Omega_2$, the relative merits of our condition (\ref{1.1}) compared to the negative upper bounded on curvature remain to be further explored. Nonetheless, we showed that our condition holds for any bounded homogeneous domain $\Omega_2$ (Corollary~\ref{cor: homogeneous Schwarz Lemma}). In fact, the condition (\ref{1.1}) is closely related to Gromov's K\"ahler hyperbolicity and the boundedness of the Bergman representative map (see Section~\ref{sec: gradient norm of logP}). 

    On the other hand, Suzuki(\cite{suzuki1985generalized}) utilized the Sibony's P-metric to show that the Schwarz Lemma for the Bergman metric holds for any holomorphic map $f: \DD \rightarrow \Omega_2$ under certain condition on $\O_2$. Interestingly, since this condition satisfies the requirements of Theorem~\ref{mainthm: thm A}, our theorem is a generalization of Suzuki's result (Corollary~\ref{cor: Suzuki thm}). \\

    The primary strength of this paper lies in our novel approach to proving the Schwarz Lemma, which is rooted in information geometry. While most generalizations of the Schwarz Lemma follow the fundamental framework established by Ahlfors--defining a function $u = f^*g_2 / g_1$ and applying the maximum principle--we instead employ the Cauchy-Schwarz inequality from probability theory.

    \begin{mainthm}[Theorem~\ref{thm: applying Cauchy-Schwarz inequality}, Theorem~\ref{thm: when the equality holds}] \label{mainthm: thm B}
        Let $(\O_1, g_{B_1})$ and $(\O_2, g_{B_2})$ be bounded domains in $\CC^n$ and $\CC^m$ with Bergman metrics, respectively.
        Then, for any $z \in \O_1$, $X \in T_z^{1,0}\O_1$ and a holomorphic map $f: \O_1 \rightarrow \O_2$,
        \begin{align*} 
            |f^*g_{B_2} (X,X)|^2 \le C_f(z,X) \, g_{B_1}(X,X) 
        \end{align*}
        holds, where $C_f(z,X) := \operatorname{Var}[\partial_X \log P_2(f(z), f(\cdot))] $. Moreover, the equality holds for all $z \in \O_1$ and $X \in T^{1,0}_z\O_1$ if and only if 
        $$f^* g_{B_2} = \lambda \, g_{B_1}$$ 
        for some constant $\lambda \ge 0$, i.e., $f$ is a $\lambda$--isometry. 
    \end{mainthm}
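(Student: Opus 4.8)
The plan is to run a single Cauchy--Schwarz inequality in the Hilbert space $L^2(\O_1,\mu_z)$ attached to the Bergman statistical model, where $\mu_z:=P_1(z,w)\,dV(w)$. Fix $z\in\O_1$ and $X\in T_z^{1,0}\O_1$, and introduce the two random variables $B(w):=\p_X\log P_1(z,w)$ and $A(w):=\p_X\log P_2(f(z),f(w))$ on $(\O_1,\mu_z)$. By the defining property of the model (Section~\ref{subsec: Bergman statistical model}), $B$ has mean zero and $\Var[B]=\|B\|_{L^2(\mu_z)}^2=g_{B_1}(X,X)$, while $\Var[A]=C_f(z,X)$ by definition. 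The theorem then follows from $|\Cov[A,B]|^2\le\Var[A]\,\Var[B]$ once I identify the covariance with the pullback metric.

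The crux --- and the step I expect to be the main obstacle --- is the identity
\[
 f^*g_{B_2}(X,X)=\Cov[A,B]=\Exp[A\,\ov{B}].
\]
My proof of it would exploit the holomorphy of $f$ in an essential way. Writing $P_i(\zeta,w)=|K_i(\zeta,w)|^2/K_i(\zeta,\zeta)$, the derivative $\p_X$ annihilates the factor of $K_i$ that is holomorphic in the data slot, so both $A$ and $B$ are \emph{anti-holomorphic} in $w$ up to an additive constant; here it is crucial that $A=a\circ f$ for the $\O_2$-score $a$ and that $f$ is holomorphic. Consequently $\ov{B}$ is holomorphic, and I can evaluate $\Exp[A\,\ov B]=\tfrac{1}{K_1(z,z)}\int_{\O_1}A\,\ov B\,|K_1(z,\cdot)|^2\,dV$ by the reproducing property of $K_1$ against the coherent state $\kappa_z=K_1(\cdot,z)$: the integral collapses to a derivative of a holomorphic function at the diagonal $w=z$. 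A short computation, using $\Exp[B]=0$ to discard the centering constants and the identity $\p_{\zeta^\alpha}\p_{\ov{v}^\beta}\log K_2(\zeta,v)\big|_{v=\zeta}=(g_{B_2})_{\alpha\ov\beta}$, turns this diagonal value into $\sum_{\alpha,\beta}(g_{B_2})_{\alpha\ov\beta}(f(z))\,(df\,X)^\alpha\,\ov{(df\,X)^\beta}=f^*g_{B_2}(X,X)$. In particular the covariance is real and nonnegative, as it must be. Granting this, Cauchy--Schwarz gives $|f^*g_{B_2}(X,X)|^2\le C_f(z,X)\,g_{B_1}(X,X)$, the first assertion.

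For the equality statement I would analyze the equality case of Cauchy--Schwarz. Pointwise, equality at $(z,X)$ holds iff the centered variable $A-\Exp[A]$ and $B$ are linearly dependent in $L^2(\mu_z)$, say $A-\Exp[A]=\theta\,B$. Since $A=A_X$ and $B=B_X$ depend $\CC$-linearly on $X$ and $X\mapsto B_X$ is injective (its squared norm is $g_{B_1}(X,X)>0$), a standard linearity argument forces $\theta$ to be independent of $X$: there is a single scalar $\lambda(z)\ge 0$ with $A_X-\Exp[A_X]=\lambda(z)B_X$ for all $X$. Pairing with $B_X$ and polarizing yields $f^*g_{B_2}=\lambda(z)\,g_{B_1}$ as Hermitian forms at each point. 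To upgrade $\lambda$ to a constant I would use that both $f^*\omega_{B_2}$ and $\omega_{B_1}$ are closed $(1,1)$-forms, so $0=d(\lambda\,\omega_{B_1})=d\lambda\wedge\omega_{B_1}$; injectivity of the Lefschetz operator on $1$-forms in complex dimension $\ge 2$ gives $d\lambda=0$, and connectedness of $\O_1$ makes $\lambda$ constant (the one-dimensional and degenerate cases I would settle by integrating the identity below).

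The converse --- that a $\lambda$-isometry forces equality everywhere --- is cleanest through Calabi's diastasis. Writing $D_i$ for the diastasis of $(\O_i,g_{B_i})$, one has $\log P_i(z,w)=\log K_i(w,w)-D_i(z,w)$, whence $B_X=-\p_X D_1(z,\cdot)$ and $A_X=-\p_X\big[D_2(f(z),f(\cdot))\big]$, the $\log K_i(w,w)$ term being $z$-independent. If $f^*g_{B_2}=\lambda\,g_{B_1}$ with $\lambda$ constant, then $f$ is a holomorphic isometry from $(\O_1,\lambda g_{B_1})$ to $(\O_2,g_{B_2})$, and Calabi's theorem gives $D_2(f(z),f(w))=\lambda\,D_1(z,w)$; differentiating in $X$ yields $A_X=\lambda B_X$ exactly, which is precisely equality in Cauchy--Schwarz for every $(z,X)$. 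Running this relation in reverse --- integrating $A_X=\lambda(z)B_X$ using that the diastasis vanishes on the diagonal and is Hermitian symmetric --- also disposes of the constancy of $\lambda$ in the remaining cases of the forward direction, completing the equivalence.
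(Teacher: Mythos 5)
Your first paragraph reproduces the paper's skeleton exactly (the same two scores $B=\p_X\log P_1(z,\cdot)$ and $A=\p_X\log P_2(f(z),f(\cdot))$, with $\Var[B]=g_{B_1}(X,X)$, $\Var[A]=C_f(z,X)$, $\Cov[A,B]=f^*g_{B_2}(X,X)$, then Cauchy--Schwarz), but your treatment of the two identities that make this scheme work has genuine gaps. First, $\Exp[B]=0$ is \emph{not} a ``defining property of the model'': the definition of a statistical model only guarantees $\p_X\log P_1(z,\cdot)\in L^2(\O_1,\Phi(z))$, and the mean-zero property amounts to interchanging $\p_X$ with the integral, which the paper explicitly flags as non-trivial because the off-diagonal Bergman kernel $\Bergman_1(z,\cdot)$ is unbounded in general; the paper instead proves it by an orthogonality computation in a Bergman special orthonormal basis. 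Second, and more seriously, your collapse of $\Exp[A\,\ov{B}]$ via the reproducing property glosses over two points: (i) $A(w)$ is only defined off the zero set of $\Bergman_2(f(z),f(\cdot))$, so ``$A$ is anti-holomorphic in $w$ up to a constant'' holds only off an analytic set, and (ii) applying the (derivative) reproducing property requires the relevant holomorphic functions --- essentially $\ov{A(\cdot)}\,\Bergman_1(\cdot,z)$ after extension across that set --- to lie in $A^2(\O_1)$. This is exactly where the hypothesis $C_f(z,X)=\Var[A]<\infty$ must enter (together with the trivial reduction when it is infinite), via an $L^2$ bound and the removable singularity theorem; you never invoke it anywhere in the inequality part. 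This membership-in-$A^2$ step is the technical heart of the paper's key lemma (there implemented with special bases for both $A^2(\O_1)$ and $A^2(\O_2)$ and the coefficient computation $a_0=0$, $a_1$ explicit), and your sketch, while pointing at the right mechanism, leaves it unproved.

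Your equality analysis genuinely diverges from the paper and is mostly sound. The linearity argument that $\theta$ is independent of $X$ (injectivity of the linear map $X\mapsto B_X$, since $\|B_X\|^2=g_{B_1}(X,X)>0$, preserves linear independence) is valid and arguably cleaner than the paper's route, which differentiates the proportionality in $\ov{X}_j$ and uses that $\Phi$ is an immersion. Your constancy-of-$\lambda$ argument via $d\lambda\wedge\omega_{B_1}=0$ and pointwise Lefschetz injectivity is correct for $n\ge 2$, but gives nothing when $n=1$ (every $3$-form on a real surface vanishes), and your fallback ``integrating the identity'' is too vague to close this case; the paper's method works uniformly in $n$: apply $\p_{\ov{X}}$ in $z$ to the relation $A_X=\lambda(z)B_X$, use that for each $(z,X)$ there exists $\xi$ with $\p_X\log P_1(z,\xi)\neq 0$, and conclude $\lambda$ is holomorphic and real-valued, hence constant --- this transcribes directly into your diastasis notation. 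For the converse, citing Calabi's diastasis rigidity is legitimate when $\lambda>0$ (so that $f$ is a holomorphic isometric immersion of $(\O_1,\lambda g_{B_1})$), provided you treat $\lambda=0$ separately (then $f$ is constant and equality is trivial) and extend the local diastasis identity to all of $\O_1\times\O_1$, where $\log$ is ill-defined at kernel zeros; the paper avoids Calabi entirely by exponentiating a local pluriharmonic potential and invoking real-analytic continuation of globally defined functions. So: same strategy for the inequality with a fixable but real gap at its crux (the $A^2$-membership/removable-singularity step and the mean-zero score), and an independent, essentially correct equality argument with a hole at $n=1$.
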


    As a direct consequence of the Cauchy-Schwarz inequality, Theorem \ref{mainthm: thm B} stands as a fundamental result with significant potential for further refinement. By imposing the additional condition (\ref{1.1}), we establish Theorem \ref{mainthm: thm A} as an application of Theorem \ref{mainthm: thm B}.

%~~~~~~~~~~~~~~~~~~~~~~~~~~~~~~~~~~~~~~~~~~~~~~~~~~~~~~~~~~~~~~~~~~~~~~~~~~~~~~~~~~~~~~~~~~~~~~~~~~~~~~~~~~~~~~~~~~~~~~~~~~~~~~~~

\vspace{5mm}

\section{Preliminaries}

\subsection{Bergman kernels and Bergman metrics}
    Let $\Omega$ be a bounded domain in $\CC^n$. Define the {\it Bergman space}, denoted by $A^2(\Omega)$, to be the set of all holomorphic functions on $\Omega$ that are square integrable with respect to the Lebesgue measure. We can equip $A^2(\Omega)$ with the Hermitian inner product: for any two elements $f$ and $g$ in $A^2(\Omega)$, $\langle f,g \rangle$ is defined by
    $$
        \langle f,g \rangle := \int_{\Omega} f(z) \overline{g(z)} dV,
    $$
    where $dV$ is the Lebesgue measure. With this inner product, the mean-value property for holomorphic functions and Montel's theorem tell us that $A^2(\Omega)$ is a separable Hilbert space.

    \begin{lem}
        For each $z \in \Omega$, the linear functional $\mathrm{eval}_z: A^2(\Omega) \to \CC$, defined by $f \longmapsto f(z)$, is continuous.
    \end{lem}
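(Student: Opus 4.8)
The plan is to exhibit, for each fixed $z \in \Omega$, an explicit constant $C_z > 0$ such that $|f(z)| \le C_z \|f\|$ for every $f \in A^2(\Omega)$. Since a linear functional on a normed space is continuous precisely when it is bounded, this estimate suffices. The only structural fact I need is the mean-value property already invoked above: a holomorphic function on $\Omega \subset \CC^n \cong \RR^{2n}$ is harmonic as a $\CC$-valued map, so it reproduces its value at the center of any Euclidean ball contained in $\Omega$ by averaging.

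First I would use that $\Omega$ is open to fix a radius $r > 0$ with $\overline{B(z,r)} \subset \Omega$, where $B(z,r)$ denotes the Euclidean ball of radius $r$ about $z$ and $V_r := \mathrm{Vol}(B(z,r))$. By the mean-value property and the Cauchy--Schwarz inequality in $L^2(B(z,r), dV)$ applied to the pairing of $f$ against the constant function $1$,
$$
    |f(z)| = \frac{1}{V_r} \left| \int_{B(z,r)} f(w)\, dV(w) \right| \le \frac{1}{V_r} \left( \int_{B(z,r)} |f(w)|^2\, dV \right)^{1/2} V_r^{1/2}.
$$
Since $B(z,r) \subset \Omega$, the $L^2$-norm over the ball is dominated by the full Bergman norm, so
$$
    |f(z)| \le V_r^{-1/2} \, \|f\|_{L^2(B(z,r))} \le V_r^{-1/2} \, \|f\|.
$$
Thus $\mathrm{eval}_z$ is bounded with norm at most $V_r^{-1/2}$, hence continuous.

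There is no serious obstacle here; the argument is routine. The only point demanding a little care is the justification of the mean-value property in several variables, which follows either from the harmonicity of $f$ on $\RR^{2n}$ or by iterating the classical one-variable mean-value property over a polydisc centered at $z$. I note that the explicit constant $V_r^{-1/2}$, besides establishing continuity, is exactly what later permits the Riesz representation theorem to produce the reproducing (Bergman) kernel.
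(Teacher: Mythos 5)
Your proof is correct and is precisely the standard argument the paper has in mind: the lemma is stated without proof, immediately after the paper invokes ``the mean-value property for holomorphic functions,'' and your combination of the mean-value property on a ball $\overline{B(z,r)} \subset \Omega$ with the Cauchy--Schwarz inequality, yielding $|f(z)| \le V_r^{-1/2}\|f\|$, is exactly that routine estimate. Nothing is missing; the several-variables mean-value property is correctly justified via harmonicity of holomorphic functions.
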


    Thanks to the Riesz representation theorem, there exists $K_z \in A^2(\Omega)$ such that
    \begin{align} \label{equ: reproducing property}
        f(z) = \mathrm{eval}_z (f) = \langle f, K_z \rangle = \int_{\Omega} f(\xi) \overline{K_z (\xi)} dV(\xi).
    \end{align}
    Write $\Bergman(z,\xi) := \overline{K_z(\xi)}$. This function on $\Omega \times \Omega$ is called the {\it Bergman kernel} and \eqref{equ: reproducing property} is called the reproducing property of the Bergman kernel.
    
    Let $\{ \phi_j \}_{j=0}^{\infty}$ be an orthonormal basis for $A^2(\Omega)$. Then, the Bergman kernel can be written as
    $$
        \Bergman(z,\xi) = \sum_{j=0}^{\infty} \phi_j(z)\overline{\phi_j(\xi)},
    $$
    which is independent of the choice of an orthonormal basis. This shows that $\Bergman(z,\xi)$ is holomorphic in $z$ while it is anti-holomorphic in $\xi$.
    
    Provided that the Bergman kernel is positive on the diagonal, we can define a hermitian form on the tangent space of $\Omega$ as follows: for two tangent vectors $X = (X_1,\ldots, X_n)$ and $Y=(Y_1,\ldots, Y_n) \in T_z^{1,0}\Omega$, 
    $$
        g_B (X,Y) = \sum_{j,k=1}^{n} \frac{\partial^2}{\partial z_j \partial \bar z_k} \log{\Bergman(z,z)}  X_j \overline{Y_k}.
    $$
    It is known that, for a bounded domain $\O \subset \CC^n$, $\Bergman(z,z) > 0$ for all $z \in \Omega$ and $g_B$ is positive definite everywhere. The K\"ahler metric $g_B$ is called the {\it Bergman metric} of $\O$. \\

    Fix $z_0 \in \Omega$ and $X = (X_1,\ldots, X_n) \in  T_{z_0}^{1,0}\Omega$. Let $\mathcal{H}_0$ be the kernel of $\mathrm{eval}_{z_0}$ and take $s_0 \in A^2(\Omega)$ to be a unit vector in the orthogonal complement of $\mathcal{H}_0$. On $\mathcal{H}_0$, define a continuous linear functional $\partial_X : \mathcal{H}_0 \to \CC$ by
    $$
        f \longmapsto (\partial_X f)(z_0) = \sum_{j=1}^n X_j \frac{\partial f}{\partial z_j} (z_0).
    $$
    Again, let $\mathcal{H}_1 \subset \mathcal{H}_0$ be the kernel of $\partial_X$ and let $s_1 \in \mathcal{H}_0$ be a unit vector in the orthogonal complement of $\mathcal{H}_1$ in $\mathcal{H}_0$. We extend $\{s_0,s_1\}$ to a complete system $\{s_0,s_1,\ldots\}$ for $A^2(\Omega)$ by adding any orthonormal basis for $\mathcal{H}_1$. With this basis, a direct computation yields
    \begin{align*}
        \Bergman(z_0,z_0) &= |s_0(z_0)|^2 > 0, \\
        g_B (X,X) &= \frac{|\partial_X s_1 (z_0)|^2}{|s_0(z_0)|^2} > 0.
    \end{align*}
    This argument works since $\Omega$ is a bounded domain and such a complete system is referred to as a Bergman's {\it special orthonormal basis with respect to $z_0 \in \O$ and $X \in T^{1,0}_{z_0} \O$}.

    \begin{lem}[cf. \cite{Wang15}] \label{lem: A^2 separates points}
        $A^2(\O)$ separates points, i.e., for all $z_1, z_2 \in \O$, there exists $f \in A^2(\O)$ such that $f(z_1)=0$ and $f(z_2) \neq 0$, if and only if $|\Bergman(z_1, z_2)|^2 \neq \Bergman(z_1,z_1) \Bergman(z_2,z_2).$
    \end{lem}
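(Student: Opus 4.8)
The plan is to translate the inequality between Bergman kernel values into the Cauchy--Schwarz inequality on the Hilbert space $A^2(\O)$, and then to recognize the separation condition as the linear independence of the reproducing kernels. First I would record the basic identities coming from the reproducing property \eqref{equ: reproducing property}: writing $K_{z}$ for the representing vector of $\mathrm{eval}_z$, one has $\Bergman(z_1, z_2) = \inner{K_{z_2}, K_{z_1}}$ and $\Bergman(z_j, z_j) = \norm{K_{z_j}}^2$ for $j = 1, 2$ (the second because $\norm{K_{z_j}}^2$ is real). With these, the Cauchy--Schwarz inequality in $A^2(\O)$ gives immediately
\begin{align*}
    |\Bergman(z_1, z_2)|^2 = |\inner{K_{z_2}, K_{z_1}}|^2 \le \norm{K_{z_1}}^2 \norm{K_{z_2}}^2 = \Bergman(z_1, z_1)\,\Bergman(z_2, z_2),
\end{align*}
and the equality case of Cauchy--Schwarz tells us that equality holds if and only if $K_{z_1}$ and $K_{z_2}$ are linearly dependent. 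Since $\O$ is bounded, $\Bergman(z_j, z_j) > 0$, so each $K_{z_j}$ is a nonzero vector; thus the strict inequality $|\Bergman(z_1, z_2)|^2 \neq \Bergman(z_1,z_1)\,\Bergman(z_2,z_2)$ is exactly the statement that $K_{z_1}$ and $K_{z_2}$ are linearly independent.

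Second, I would establish the equivalence between this linear independence and the separation property, using that $f(z) = \inner{f, K_z}$, so that $f$ vanishes at $z$ precisely when $f \perp K_z$. For the direction assuming linear independence, suppose toward a contradiction that every $f \in A^2(\O)$ with $f(z_1) = 0$ also satisfies $f(z_2) = 0$; then $\ker(\mathrm{eval}_{z_1}) \subseteq \ker(\mathrm{eval}_{z_2})$, and since $\mathrm{eval}_{z_1}$ is a nonzero continuous functional, the two functionals must be proportional, forcing $K_{z_2} = \lambda K_{z_1}$ for some $\lambda \in \CC$ and contradicting independence. Hence there is an $f$ with $f(z_1) = 0$ and $f(z_2) \neq 0$. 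Conversely, if such an $f$ exists and $K_{z_2} = \lambda K_{z_1}$, then $f(z_2) = \inner{f, K_{z_2}} = \overline{\lambda}\,\inner{f, K_{z_1}} = \overline{\lambda}\, f(z_1) = 0$, a contradiction; so $K_{z_1}$ and $K_{z_2}$ are linearly independent.

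Chaining the two equivalences yields the lemma. I expect the main, though modest, obstacle to be the equality-case bookkeeping: one must confirm that the representing vectors are genuinely nonzero (guaranteed by boundedness of $\O$ through $\Bergman(z_j,z_j) > 0$) and carry the passage from nested kernels of functionals to proportionality of the $K_{z_j}$ carefully, keeping track of the complex conjugation in $f(z_2) = \overline{\lambda}\,f(z_1)$ so that the Hermitian convention is respected throughout.
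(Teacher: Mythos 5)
Your proof is correct and takes essentially the same route as the paper: both pivot on the equality case of the Cauchy--Schwarz inequality, identifying the condition $|\Bergman(z_1,z_2)|^2 \neq \Bergman(z_1,z_1)\Bergman(z_2,z_2)$ with linear independence of the reproducing data --- the paper in coordinates, via an orthonormal basis chosen with $\phi_0 = f/\norm{f}$ so that the sequences $(\phi_j(z_1))_j$ and $(\phi_j(z_2))_j$ are visibly independent, and you coordinate-free via the vectors $K_{z_1}, K_{z_2}$. Your write-up additionally spells out the converse direction (nested kernels $\ker(\mathrm{eval}_{z_1}) \subseteq \ker(\mathrm{eval}_{z_2})$ forcing proportionality of the functionals, hence of the $K_{z_j}$), which the paper's one-line proof leaves implicit.
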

    \begin{proof}
        Consider an orthonormal basis $\{ \phi_j \}_{j=0}^{\infty}$ for $A^2(\O)$ satisfying $\phi_0 = f / \norm{f}$.
        Then $(\phi_0(z_1), \phi_1(z_1), \dots )$ and $(\phi_0(z_2), \phi_1(z_2), \dots )$ are linearly independent, which is equivalent to $|\Bergman(z_1, z_2)|^2 \neq \Bergman(z_1,z_1) \Bergman(z_2,z_2)$ by the Cauchy--Schwarz inequality.
    \end{proof}

\subsection{Bergman statistical models} \label{subsec: Bergman statistical model}
    For a bounded domain $\O \subset \CC^n$, let $\Prob(\O)$ be the set of probability distributions with positive density function on $\O$. That is, $\Prob(\O)$ is defined by
    $$
    \Prob(\O) = \left\{ \mu \in \mathfrak{M} (\O) : d\mu = \phi dV \text{ for some positive } \phi \in L^1(\O,dV)  \text{ and } \int_{\O}d\mu = 1\right\},
    $$
    where $\mathfrak{M}(\O)$ is the set of positive measures on $\O$ and $dV$ is the Lebesgue measure. Regarding $\Prob(\O)$ as a subset of the Banach manifold $\mathcal{S}(\O)$ of all signed measures on $\O$, the tangent space of $\Prob(\O)$ at $\mu$ is given by
    $$
    T_\mu \Prob(\O) := \left\{ \sigma \in \mathcal{S}(\O) \,:\, \int_{\O} d\sigma = 0 \text{ and } \sigma \ll \mu \right\}.
    $$
    Note that if we identify $T_{\mu}\Prob(\O)$ with the set of Radon-Nikodym derivatives of probability measures, then we have
    $$
    T_\mu \Prob(\O) \cong \left\{ \phi \in L^1 (\O, \mu) \,:\, \int_{\O} \phi \, d\mu = 0 \right\}.
    $$
    
    The {\it Fisher information metric} $g_F$ at $\mu \in \Prob(\O)$ is a quadratic form on $T_\mu \Prob(\O)$ defined by, for all $\sigma_1, \sigma_2 \in T_\mu \Prob(\O)$,
    $$
        (g_F)_\mu (\sigma_1, \sigma_2) := \int_{\O} \frac{d\sigma_1}{d\mu} \frac{d\sigma_2}{d\mu} d\mu =  \int_{\O} \phi \, \psi P dV,
    $$
    where $d\mu = PdV$, $d\sigma_1 = \phi d\mu$, and $d\sigma_2 = \psi d\mu$. \\

    Let $M$ be a smooth manifold and let $\Phi$ be a map from $M$ to $\Prob (\O)$. The triple $(M, \O, \Phi)$ is called a {\it statistical model} (or {\it statistical manifold}) (\cite{amari2000methods}, \cite{ay2015information}) if 
    \begin{enumerate}
        \item $\log{P(x,\xi)}$ is defined as a $C^1$ function in $x$ for almost all $\xi \in \O$, where $P(x,\xi)$ is the Radon-Nikodym derivative of $\Phi(x)$ with respect to $dV$, and
        \item for all continuous vector field $X$ on $M$, the function $\p_X \log{P(x,\cdot)}$ belongs to $L^2(\O, \Phi(x))$ and its $L^2$ norm varies continuously on $M$. 
    \end{enumerate}

    In this article, we consider the following statistical model.  
    For a bounded domain $\O$ in $\CC^n$,
    define the map $\Phi:\O \to \Prob(\O)$ by
    $$
        \Phi(z) := P(z,\xi)dV(\xi) := \frac{|\Bergman(z,\xi)|^2}{\Bergman(z,z)}dV(\xi).
    $$
    The triple $(\O, \O, \Phi)$ is called the {\it Bergman statistical model} for $\O$. 
    We refer to \cite{cho2023statistical} for fundamental properties of Bergman statistical models. The starting point for studying Bergman geometry from a statistical viewpoint is the discovery of the relationship between the Bergman metric of $\O$ and the Fisher information metric of $\Prob(\O)$.

    \begin{thm}[\cite{BurbeaJacobRaoCRadhakrishna84}, {\cite[Theorem~3.9]{cho2023statistical}}] \label{thm:[CY2023]Thm3.9}
        For a bounded domain $\O \subset \CC^n$, the map $\Phi$ is an embedding from $\O \to \Prob(\O)$. Moreover, we have
        $$
            g_B (X,Y) = (\Phi^*g_F)(X,\overline{Y})
        $$
        for any $X, Y \in T^{1,0}_z\O$, where $g_B$ is the Bergman metric of $\O$.
    \end{thm}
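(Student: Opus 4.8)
The statement bundles together two assertions—the metric identity $g_B(X,Y)=(\Phi^*g_F)(X,\ov Y)$ and the claim that $\Phi$ is an embedding—and I regard the identity as the substance, with the embedding following from it together with Lemma~\ref{lem: A^2 separates points}. The guiding idea is information-geometric: the pulled-back Fisher metric is nothing but the covariance of the score $\partial_X\log P$, and this covariance can be evaluated explicitly through the orthonormal-basis expansion $\Bergman(z,\xi)=\sum_j\phi_j(z)\ov{\phi_j(\xi)}$. The plan is to make both of these precise and then read off the match with the Bergman metric.

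For the identity I would first extend the (real) Fisher metric $\CC$-bilinearly to the complexified tangent space. Since $P(z,\cdot)$ is real-valued we have $\partial_{\ov Y}\log P=\ov{\partial_Y\log P}$, so that for $X,Y\in T^{1,0}_z\O$
\[
(\Phi^*g_F)(X,\ov Y)=\int_\O (\partial_X\log P)\,\ov{\partial_Y\log P}\;P\,dV .
\]
Writing $\log P(z,\xi)=\log\Bergman(z,\xi)+\ov{\log\Bergman(z,\xi)}-\log\Bergman(z,z)$ and using that $\Bergman$ is holomorphic in its first slot, one obtains $\partial_X\log P(z,\xi)=\dfrac{(\partial_X\Bergman)(z,\xi)}{\Bergman(z,\xi)}-\partial_X\log\Bergman(z,z)$, whose second term is independent of $\xi$. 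Because the score has vanishing mean, $\int_\O \partial_X\log P\cdot P\,dV=\partial_X\!\int_\O P\,dV=0$, the integral above is exactly the covariance $\Cov[\partial_X\log P,\partial_Y\log P]$; equivalently, the $\xi$-independent term is annihilated once the product of means is subtracted.

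It then remains to evaluate this covariance and to match it with $g_B$. Expanding $\Bergman(z,\xi)=\sum_j\phi_j(z)\ov{\phi_j(\xi)}$ and invoking orthonormality of $\{\phi_j\}$ yields
\[
\frac{1}{\Bergman(z,z)}\int_\O (\partial_X\Bergman)(z,\xi)\,\ov{(\partial_Y\Bergman)(z,\xi)}\,dV(\xi)=\frac{1}{\Bergman(z,z)}\sum_j (\partial_X\phi_j)(z)\,\ov{(\partial_Y\phi_j)(z)} .
\]
Subtracting the product of means on this side and, on the other side, differentiating $\log\Bergman(z,z)$ directly by the quotient rule, both computations produce the same two-term expression $\Bergman(z,z)^{-1}\sum_j(\partial_X\phi_j)\ov{(\partial_Y\phi_j)}-\Bergman(z,z)^{-2}(\sum_j(\partial_X\phi_j)\ov{\phi_j})(\sum_\ell\phi_\ell\ov{(\partial_Y\phi_\ell)})$, so $(\Phi^*g_F)(X,\ov Y)=g_B(X,Y)$.

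Finally, the embedding. For injectivity, if $\Phi(z_1)=\Phi(z_2)$ then equating densities at $\xi=z_1$ gives $|\Bergman(z_1,z_2)|^2=\Bergman(z_1,z_1)\Bergman(z_2,z_2)$; since $\O$ is bounded the coordinate functions lie in $A^2(\O)$, so $A^2(\O)$ separates points and Lemma~\ref{lem: A^2 separates points} forces $z_1=z_2$. That $\Phi$ is an immersion is immediate from the identity, as $\Phi^*g_F=g_B$ is positive definite, whence $d\Phi$ has trivial kernel. I expect the genuine obstacle to be twofold and purely analytic: justifying the interchange of $\partial_X$ with the integral, together with the term-by-term differentiation of the kernel series (via local uniform convergence on compacta), which underlies the vanishing-mean and covariance steps; and upgrading the injective immersion to a topological embedding, i.e. continuity of $\Phi^{-1}$ on the image, which requires controlling the behavior of $\Phi$ near $\partial\O$ in the ambient topology of $\mathcal{S}(\O)$.
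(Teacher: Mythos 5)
The paper offers no proof of this theorem to compare against: it is imported from Burbea--Rao and \cite[Theorem~3.9]{cho2023statistical}, and is used as a black box (only its consequences, via Lemma~\ref{lem:proof in main thm} and Theorem~\ref{thm: applying Cauchy-Schwarz inequality}, are proved in the paper). Judged on its own, your computation of the metric identity is correct and is essentially the standard argument of the cited references: identify $d\Phi(X)$ with the score $\partial_X\log P(z,\cdot)$, split off the $\xi$-independent term via $\partial_X\log P(z,\xi)=\partial_X\Bergman(z,\xi)/\Bergman(z,\xi)-\partial_X\log\Bergman(z,z)$, and evaluate the covariance by Parseval in the orthonormal basis; the resulting two-term expression is exactly $\partial_X\partial_{\ov{Y}}\log\Bergman(z,z)$. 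One improvement you missed: the interchange of $\partial_X$ with the integral, which you flag as a ``genuine obstacle,'' is already dispatched by your own expansion and needs no dominated convergence. The functional $h\mapsto\partial_Xh(z)$ is bounded on $A^2(\O)$ by Cauchy estimates, so $\sum_j|\partial_X\phi_j(z)|^2<\infty$ and $\ov{\partial_X\Bergman(z,\cdot)}=\sum_j\ov{\partial_X\phi_j(z)}\,\phi_j\in A^2(\O)$; continuity of the inner product then gives
\begin{equation*}
\operatorname{E}\!\left[\frac{\partial_X\Bergman(z,\cdot)}{\Bergman(z,\cdot)}\right]
=\frac{1}{\Bergman(z,z)}\sum_j\partial_X\phi_j(z)\,\ov{\phi_j(z)}
=\partial_X\log\Bergman(z,z),
\end{equation*}
whence the vanishing mean directly. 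This is the same mechanism the paper uses in Lemma~\ref{lem:proof in main thm} (special orthonormal basis) and Proposition~\ref{prop: Poisson reproducing}, consistent with the paper's remark that the naive differentiation-under-the-integral is nontrivial because the off-diagonal kernel is unbounded.

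The one genuine gap, which you acknowledge but do not close, is the topological half of the word ``embedding.'' Your argument delivers an injective immersion: injectivity from $P(z_1,\cdot)=P(z_2,\cdot)$ evaluated at $\xi=z_1$, giving $|\Bergman(z_1,z_2)|^2=\Bergman(z_1,z_1)\Bergman(z_2,z_2)$, then Lemma~\ref{lem: A^2 separates points} (affine functions are square-integrable on a bounded domain, so $A^2(\O)$ separates points, as the paper also notes in the proof of Theorem~\ref{thm: when the equality holds}); immersion from positive definiteness of $g_B=\Phi^*g_F$. But an injective immersion is strictly weaker than an embedding: one must still show $\Phi$ is a homeomorphism onto its image in the ambient topology on $\Prob(\O)\subset\mathcal{S}(\O)$, i.e.\ rule out sequences $z_k$ escaping to $\partial\O$ (or accumulating elsewhere in $\O$) with $\Phi(z_k)$ converging to $\Phi(z)$. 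Since the theorem as stated asserts the embedding, this step must either be proved or explicitly delegated to \cite[Theorem~3.9]{cho2023statistical}; as written, your proposal proves the metric identity and the injective-immersion statement only.
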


    The density function $P(z,\xi)$ for the Bergman statistical model is usually called the {\it Berezin kernel} or  the {\it Poisson-Bergman kernel}. Although it is real-valued, in contrast to the Bergman kernel, it also satisfies the following reproducing property.
    \begin{prop}[\cite{cho2023statistical}] \label{prop: Poisson reproducing}
		For $z \in \O$, assume that either
		\begin{equation*} 
			 f(\cdot)\Bergman(\cdot, z) \in A^2(\O) \quad \text{ or } \quad \overline{ f(\cdot) \Bergman(z, \cdot)} \in A^2(\O).
		\end{equation*}
		Then
		$$ f(z) = \int_{\O}  f(\xi) \Poisson(z,\xi) dV(\xi). $$
	\end{prop}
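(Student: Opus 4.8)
The plan is to reduce both hypotheses to the reproducing property \eqref{equ: reproducing property} of the Bergman kernel. The starting observation is the Hermitian symmetry $\ov{\Bergman(z,\xi)} = \Bergman(\xi, z)$, which is immediate from the expansion $\Bergman(z,\xi) = \sum_{j} \phi_j(z)\ov{\phi_j(\xi)}$. Using it I rewrite the density as
\[
    \Poisson(z,\xi) = \frac{|\Bergman(z,\xi)|^2}{\Bergman(z,z)} = \frac{\Bergman(\xi,z)\,\Bergman(z,\xi)}{\Bergman(z,z)},
\]
so that the integral to be evaluated is $\frac{1}{\Bergman(z,z)}\int_\O f(\xi)\,\Bergman(\xi,z)\,\Bergman(z,\xi)\,dV(\xi)$. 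The whole argument is then a matter of peeling off the correct factor so that what remains is an honest element of $A^2(\O)$ to which \eqref{equ: reproducing property}, or its complex conjugate, applies.

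Under the first hypothesis I set $g(\xi) := f(\xi)\,\Bergman(\xi,z)$, which is holomorphic in $\xi$ and by assumption lies in $A^2(\O)$. The integral becomes $\frac{1}{\Bergman(z,z)}\int_\O g(\xi)\,\Bergman(z,\xi)\,dV(\xi)$, and \eqref{equ: reproducing property} applied to $g$ gives $g(z) = f(z)\,\Bergman(z,z)$; dividing by $\Bergman(z,z)$ yields $f(z)$. Under the second hypothesis I instead group the integrand as $f(\xi)\Bergman(z,\xi)\cdot\ov{\Bergman(z,\xi)}$ and set $w(\xi) := \ov{f(\xi)\,\Bergman(z,\xi)}$, which is assumed to be in $A^2(\O)$. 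Since $f(\xi)\Bergman(z,\xi) = \ov{w(\xi)}$ and $\ov{\Bergman(z,\xi)} = \Bergman(\xi,z)$, the integral is $\frac{1}{\Bergman(z,z)}\int_\O \ov{w(\xi)}\,\Bergman(\xi,z)\,dV(\xi)$. Taking the complex conjugate of \eqref{equ: reproducing property} for $w \in A^2(\O)$ gives $\ov{w(z)} = \int_\O \ov{w(\xi)}\,\Bergman(\xi,z)\,dV(\xi)$, and since $\ov{w(z)} = f(z)\,\Bergman(z,z)$ the same cancellation produces $f(z)$.

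The only genuinely necessary verification---and the one I expect to be the sole obstacle---is the absolute convergence that legitimizes both the regrouping of the integrand and the invocation of the reproducing property. This is handled by Cauchy--Schwarz together with the fact that $\Bergman(z,\cdot) \in L^2(\O)$ with $\norm{\Bergman(z,\cdot)}_{L^2}^2 = \Bergman(z,z) < \infty$, itself a consequence of \eqref{equ: reproducing property} applied to $K_z$. Indeed, in the first case $\int_\O |f(\xi)|\,|\Bergman(z,\xi)|^2\,dV(\xi) = \int_\O |g(\xi)|\,|\Bergman(z,\xi)|\,dV(\xi) \le \norm{g}_{L^2}\,\norm{\Bergman(z,\cdot)}_{L^2} < \infty$, and symmetrically for $w$ in the second case. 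With absolute convergence in hand the manipulations above are fully justified, so in either case the identity is a clean consequence of the reproducing property.
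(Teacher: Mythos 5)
Your proposal is correct, and it is the natural argument: the paper itself states this proposition without proof (citing \cite{cho2023statistical}), and the cited proof is exactly this reduction --- write $\Poisson(z,\xi)=\Bergman(\xi,z)\Bergman(z,\xi)/\Bergman(z,z)$ via Hermitian symmetry, absorb one kernel factor into the hypothesized $A^2$ function, and apply the reproducing property \eqref{equ: reproducing property} (or its conjugate), with Cauchy--Schwarz against $\norm{\Bergman(z,\cdot)}_{L^2}^2=\Bergman(z,z)$ giving absolute convergence. Nothing is missing.
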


\subsection{Cauchy-Schwarz inequality} In this subsection, we will define the notions of expectation and (co)variance of measurable functions $Z$ and $W$. Although these notions originate from probability theory, a deep background in probability is not necessary to follow the material. Consider a statistical model $(M, \Omega, \Phi)$. Denote by $P(x, \cdot)$ the density function of $\Phi(x) \in \Prob(\Omega)$ for $x \in M$.

    \begin{defn} \label{def:Moments of random variables}
   Let $Z$ and $W$ be functions from $M \times \O$ to $\CC$ such that, for all $x \in M$, $Z(x,\cdot)$ and $W(x, \cdot)$ are measurable functions with respect to the Lebesgue measure. For each $x \in M$,
        \begin{enumerate}
            \item the {\it expectation} of $F$ is defined by
            $$
                \operatorname{E}[Z] := \int_{\O} Z(x,\xi) P(x,\xi) dV(\xi).
            $$
            \item The {\it covariance} of $Z$ and $W$ is defined by
            \begin{align*}
                \operatorname{Cov}[Z,W] 
                &:= \operatorname{E} [(Z-\operatorname {E} [Z]){\overline {(W-\operatorname {E} [W])}}] \\
                &= \operatorname {E} [Z \overline{W}] - \operatorname {E} [Z] \operatorname {E} [\overline{W}]. 
            \end{align*}
            \item The {\it variance} of $Z$ is defined by
            \begin{align*}
                \operatorname{Var}[Z]
                &:= \operatorname {Cov} [Z,Z] \\
                &= \operatorname {E}[|Z|^2]  - \left|\operatorname {E} [Z]\right|^{2}.
            \end{align*}
        \end{enumerate}
    \end{defn}

    Note that the expectation operator can be used to define a hermitian inner product. Indeed, for each point $x \in M$, let $L^2 (\Omega, \Phi(x))$ be the space of measurable functions on $\O$ that are $L^2$ with respect to the probability measure $\Phi(x) = P(x,\cdot)dV$. Then, the expectation and the variance of a function in $L^2(\O,\Phi(x))$ is well-defined. For two functions $Z(x,\cdot),W(x,\cdot) \in L^2(\O,\Phi(x))$, the expectation $\operatorname{E}[Z \overline{W}]$ given as in the above definition defines a hermitian inner product. 
    The following lemma is crucial in this paper. 

    \begin{lem}[Cauchy-Schwarz inequality] \label{lem: Cauchy-Schwarz for variables}
    For $Z(x,\cdot)$ and $W(x,\cdot)$ in $L^2(\O, \Phi(x))$,
        \begin{align*}
            \left|\operatorname{Cov}[Z,W]\right|^2 \le \operatorname{Var}[Z] \operatorname{Var}[W].
        \end{align*}
        The equality holds if and only if $Z(x,\xi)-\operatorname{E}[Z] = \lambda(x) (W(x,\xi) - \operatorname{E}[W])$ for some $\lambda(x) \in \CC$.
    \end{lem}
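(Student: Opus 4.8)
The plan is to observe that, after centering the variables, the claim is precisely the classical Cauchy--Schwarz inequality for the Hermitian inner product $\langle Z,W\rangle := \operatorname{E}[Z\overline{W}]$ on $L^2(\O,\Phi(x))$, whose positive semi-definiteness was already recorded above. Fix $x \in M$ and write $\widetilde{Z} := Z - \operatorname{E}[Z]$ and $\widetilde{W} := W - \operatorname{E}[W]$, both of which lie in $L^2(\O,\Phi(x))$ and have vanishing expectation. Unwinding Definition~\ref{def:Moments of random variables} gives $\operatorname{Cov}[Z,W] = \operatorname{E}[\widetilde{Z}\,\overline{\widetilde{W}}]$, $\operatorname{Var}[Z] = \operatorname{E}[|\widetilde{Z}|^2]$ and $\operatorname{Var}[W] = \operatorname{E}[|\widetilde{W}|^2]$, so the asserted inequality is exactly $|\langle \widetilde{Z},\widetilde{W}\rangle|^2 \le \|\widetilde{Z}\|^2\,\|\widetilde{W}\|^2$.

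First I would dispose of the degenerate case $\operatorname{Var}[W]=0$: since $\operatorname{E}[|\widetilde{W}|^2]=\int_\O |\widetilde{W}|^2 P(x,\cdot)\,dV$ and $P(x,\cdot)>0$, this forces $\widetilde{W} = 0$ almost everywhere, whence $\operatorname{Cov}[Z,W]=0$ and both sides of the inequality vanish. Assuming now $\operatorname{Var}[W]>0$, I would expand the manifestly nonnegative quantity $\operatorname{E}[|\widetilde{Z}-\lambda\widetilde{W}|^2]$ for $\lambda\in\CC$, obtaining
\[
0 \le \operatorname{Var}[Z] - \lambda\,\overline{\operatorname{Cov}[Z,W]} - \overline{\lambda}\,\operatorname{Cov}[Z,W] + |\lambda|^2 \operatorname{Var}[W].
\]
Substituting the minimizer $\lambda = \operatorname{Cov}[Z,W]/\operatorname{Var}[W]$ collapses the right-hand side to $\operatorname{Var}[Z]-|\operatorname{Cov}[Z,W]|^2/\operatorname{Var}[W]$, and rearranging yields $|\operatorname{Cov}[Z,W]|^2\le\operatorname{Var}[Z]\operatorname{Var}[W]$.

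For the equality statement, the displayed inequality is an equality precisely when $\operatorname{E}[|\widetilde{Z}-\lambda\widetilde{W}|^2]=0$ for this choice of $\lambda$; invoking positivity of $P(x,\cdot)$ once more, this is equivalent to $\widetilde{Z}=\lambda\widetilde{W}$ almost everywhere, that is, $Z-\operatorname{E}[Z]=\lambda(x)\,(W-\operatorname{E}[W])$. The only point demanding genuine care---and the step I would flag as the main (if modest) obstacle---is the interaction of the equality characterization with the degenerate case: when $\widetilde{W}\equiv 0$ the stated relation $\widetilde{Z}=\lambda\widetilde{W}$ can hold only if $\widetilde{Z}\equiv 0$, so strictly the ``if and only if'' should be read with the understanding that $\operatorname{Var}[W]>0$ (equivalently, phrased symmetrically as linear dependence of $\widetilde{Z}$ and $\widetilde{W}$). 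In every nondegenerate situation, and in particular in the applications to $f^*g_{B_2}$ where the relevant variances are positive, the characterization holds verbatim.
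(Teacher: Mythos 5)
Your proof is correct and matches the paper's (implicit) approach: the paper states this lemma without proof, having just observed that $\operatorname{E}[Z\overline{W}]$ defines a hermitian inner product on $L^2(\Omega,\Phi(x))$, and your centering-plus-optimal-$\lambda$ argument is precisely the standard proof of Cauchy--Schwarz for that inner product. Your remark on the degenerate case $\operatorname{Var}[W]=0$ --- where equality holds but $\widetilde{Z}=\lambda\widetilde{W}$ can fail unless the condition is read symmetrically as linear dependence of the centered variables --- is a legitimate, if minor, sharpening of the lemma as stated, and it is harmless in the paper's applications, where the relevant variances are positive.
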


\vspace{5mm}

%~~~~~~~~~~~~~~~~~~~~~~~~~~~~~~~~~~~~~~~~~~~~~~~~~~~~~~~~~~~~~~~~~~~~~~~~~~~~~~~~~~~~~~~~~~~~~~~~~~~~~~~~~~~~~~~~~~~~~~~~~~~~~~~~

\section{Schwarz lemmas for Bergman metrics}
In this section, we present our main results and their proofs.
The main ingredients are the reproducing property (Proposition~\ref{prop: Poisson reproducing}) and a special orthonormal basis for $A^2(\O)$.

For bounded domains $\O_1$ and $\O_2$, let $P_1$ and $P_2$ be the density functions of the Bergman statistical models for $\O_1$ and $\O_2$, respectively. 
For a vector $X \in T^{1,0}\O_1$, we denote by $\p_X = \sum_{j=1}^n X_j \frac{\p}{\p z_j}$ and $\p_{\ov{X}} = \sum_{j=1}^n \ov{X}_j \frac{\p}{\p \ov{z}_j}$.

    \begin{lem} \label{lem:proof in main thm}
        Let $\Omega_1 \subset \CC^n$ and $\Omega_2 \subset \CC^m$ be bounded domains and 
        $f: \O_1 \rightarrow \O_2$ be a holomorphic map.
        Then, for fixed $z \in \O_1$ and $X \in T_z^{1,0}\O_1$,
        \begin{align} \label{equ: E[dlogP]=0}
            \Exp[\p_X \log P_1(z,\cdot)]
            = \int_{\O_1} \partial_X P_1(z,\xi) dV(\xi) 
            = 0.
        \end{align}
        Moreover, under the assumptions that $\operatorname{Var}[\partial_X \log P_2(f(z), f(\cdot))] < \infty$ and $df(X) \neq 0$,
        \begin{align*}
            \int_{\O_1} \partial_{\overline{X}} \log \Bergman_2(f(\xi), f(z)) \, \partial_X P_1(z,\xi) dV(\xi) = f^*g_{B_2}(X,X).
        \end{align*}
    \end{lem}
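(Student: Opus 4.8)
The plan is to establish the two displayed identities separately. For the first, note that $P_1(z,\cdot)$ is a probability density, so $\int_{\O_1}P_1(z,\xi)\,dV(\xi)=1$ for every $z$. Since $\p_X\log P_1\cdot P_1=\p_X P_1$, the integrand defining $\Exp[\p_X\log P_1(z,\cdot)]$ is exactly $\p_X P_1(z,\xi)$, which is the first equality in \eqref{equ: E[dlogP]=0}. To get the vanishing I would differentiate the constant relation $\int_{\O_1}P_1(z,\xi)\,dV(\xi)\equiv1$ in the direction $X$, passing $\p_X$ under the integral sign; as $\O_1$ is bounded and $P_1(z,\xi)=|\Bergman_1(z,\xi)|^2/\Bergman_1(z,z)$ is smooth in $z$ with $\p_X P_1(z,\cdot)$ locally dominated by an integrable function (Cauchy estimates in $z$ together with $\Bergman_1(z,\cdot)\in A^2(\O_1)$), the interchange is legitimate and gives $\int_{\O_1}\p_X P_1(z,\xi)\,dV(\xi)=\p_X 1=0$.

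For the second identity, set $w:=f(z)$ and $Y:=df(X)\in T^{1,0}_w\O_2$, so that $f^*g_{B_2}(X,X)=g_{B_2}(Y,Y)$. Since $df(X)\neq0$, I fix a Bergman special orthonormal basis $\{s_k\}_{k\ge0}$ of $A^2(\O_2)$ with respect to $w$ and $Y$ (as constructed in the preliminaries), for which $s_k(w)=0$ for $k\ge1$, $\p_Y s_k(w)=0$ for $k\ge2$, $\Bergman_2(u,v)=\sum_{k\ge0}s_k(u)\ov{s_k(v)}$, and $g_{B_2}(Y,Y)=|\p_Y s_1(w)|^2/|s_0(w)|^2$, where $\p_Y=\sum_a Y_a\,\p/\p w_a$. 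Differentiating $\log\Bergman_2(f(\xi),f(z))$ in $\bar z$ — only the second slot $f(z)$ carries $\bar z$-dependence, anti-holomorphically — and using the chain rule together with $\Bergman_2(f(\xi),w)=s_0(f(\xi))\ov{s_0(w)}$, I obtain
\[
\p_{\ov X}\log\Bergman_2(f(\xi),f(z))
=\frac{s_0(f(\xi))\,\ov{\p_Y s_0(w)}+s_1(f(\xi))\,\ov{\p_Y s_1(w)}}{s_0(f(\xi))\,\ov{s_0(w)}}
=\frac{\ov{\p_Y s_0(w)}}{\ov{s_0(w)}}+\frac{\ov{\p_Y s_1(w)}}{\ov{s_0(w)}}\,g(\xi),
\]
where $g(\xi):=s_1(f(\xi))/s_0(f(\xi))$. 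The first summand is constant in $\xi$, so against $\p_X P_1(z,\cdot)$ it integrates to $0$ by the identity \eqref{equ: E[dlogP]=0} just proven; the whole integral therefore collapses to $\big(\ov{\p_Y s_1(w)}/\ov{s_0(w)}\big)\,J$ with $J:=\int_{\O_1}g(\xi)\,\p_X P_1(z,\xi)\,dV(\xi)$.

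It remains to show $J=\p_X g(z)$. Granting for the moment that $g\,\Bergman_1(\cdot,z)\in A^2(\O_1)$, Proposition~\ref{prop: Poisson reproducing} gives $g(z)=\int_{\O_1}g\,P_1(z,\cdot)\,dV$. Writing $\p_X P_1(z,\xi)=\Bergman_1(z,z)^{-1}\p_X\Bergman_1(z,\xi)\,\ov{\Bergman_1(z,\xi)}-\p_X\log\Bergman_1(z,z)\,P_1(z,\xi)$ and applying the differentiated classical Bergman reproducing property to the fixed $A^2$ function $\xi\mapsto g(\xi)\,\ov{\Bergman_1(z,\xi)}$, the first piece becomes $\p_X g(z)+g(z)\,\p_X\log\Bergman_1(z,z)$, which cancels the contribution of the second piece and leaves $J=\p_X g(z)$. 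Finally, by the chain rule and $s_1(w)=0$,
\[
\p_X g(z)=\p_Y\!\Big(\frac{s_1}{s_0}\Big)(w)=\frac{\p_Y s_1(w)\,s_0(w)-s_1(w)\,\p_Y s_0(w)}{s_0(w)^2}=\frac{\p_Y s_1(w)}{s_0(w)},
\]
so the integral equals $\big(\ov{\p_Y s_1(w)}/\ov{s_0(w)}\big)\big(\p_Y s_1(w)/s_0(w)\big)=|\p_Y s_1(w)|^2/|s_0(w)|^2=g_{B_2}(Y,Y)=f^*g_{B_2}(X,X)$.

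The main obstacle is justifying $g\,\Bergman_1(\cdot,z)\in A^2(\O_1)$, and this is precisely where the hypothesis $\Var[\p_X\log P_2(f(z),f(\cdot))]<\infty$ enters. Expanding $\p_X\log\Bergman_2(f(z),f(\xi))$ in the same basis gives $\p_X\log\Bergman_2(f(z),f(\xi))=c+\big(\p_Y s_1(w)/s_0(w)\big)\ov{g(\xi)}$ for a constant $c$; since $\p_X\log P_2(f(z),f(\cdot))$ differs from this by a function of $z$ alone (constant in $\xi$), the two have the same variance, and because $\p_Y s_1(w)\neq0$ (as $g_{B_2}$ is positive definite and $Y\neq0$) finiteness of the variance is equivalent to $g\in L^2(\O_1,\Phi_1(z))$, i.e.\ $\int_{\O_1}|g|^2P_1(z,\cdot)\,dV<\infty$, i.e.\ $g\,\Bergman_1(\cdot,z)\in L^2(\O_1)$. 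One subtlety remains: a priori $g$ is holomorphic only off the analytic set $\{s_0\circ f=0\}$, but being locally $L^2$ there it extends holomorphically by the $L^2$ removable-singularity theorem, so $g\,\Bergman_1(\cdot,z)$ is genuinely in $A^2(\O_1)$. Everything else — the cancellation, the chain rule, and the differentiation under the integral in the classical reproducing property — is routine.
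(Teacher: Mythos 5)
Your treatment of the second identity is correct but takes a genuinely different route from the paper's on the $\O_1$ side. Both you and the authors use the special orthonormal basis of $A^2(\O_2)$ at $(f(z),df(X))$ to expand $\p_{\ov X}\log\Bergman_2(f(\xi),f(z))$ into a constant plus a multiple of $g=(\sigma_1\circ f)/(\sigma_0\circ f)$, and both use the $L^2$ removable-singularity theorem to get holomorphy across $\{\sigma_0\circ f=0\}$. But the paper also takes a special basis $\{s_j\}$ of $A^2(\O_1)$ at $(z,X)$, writes $\p_X P_1(z,\xi)=\frac{\p_X s_1(z)}{s_0(z)}\,s_0(\xi)\ov{s_1(\xi)}$, and finishes by extracting the coefficient $a_1=\int S\,\ov{s_1}\,dV$ of $S:=g\,s_0\in A^2(\O_1)$ — pure orthogonality. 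You instead avoid the $\O_1$-basis entirely and prove $\int g\,\p_X P_1(z,\cdot)\,dV=\p_X g(z)$ by combining Proposition~\ref{prop: Poisson reproducing} with the differentiated Bergman reproducing property; your cancellation is correct, and the step is rigorous because the derivative-evaluation functional is bounded on $A^2(\O_1)$. Your translation of the variance hypothesis (quotienting out the $\xi$-independent term, using $\p_Y\sigma_1(w)\neq 0$ from positive definiteness of $g_{B_2}$) is if anything cleaner than the paper's display. One repair: apply removability to $g\,\Bergman_1(\cdot,z)$ (holomorphic off the analytic set and globally $L^2$), not to $g$ itself — $g$ need not be locally $L^2$ at points of $\{\sigma_0\circ f=0\}$ where $\Bergman_1(\cdot,z)$ also vanishes, and $g\,\Bergman_1(\cdot,z)\in A^2(\O_1)$ is all you use.

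The real soft spot is your proof of \eqref{equ: E[dlogP]=0}. You differentiate $\int_{\O_1}P_1(z,\xi)\,dV(\xi)\equiv 1$ under the integral and assert local domination of $\p_X P_1(z',\cdot)$ via ``Cauchy estimates and $\Bergman_1(z,\cdot)\in A^2(\O_1)$''. This is precisely the point the authors single out in the Remark after the lemma: the off-diagonal kernel is unbounded in general, and the obvious pointwise bound $P_1(z',\xi)\le\Bergman_1(\xi,\xi)$ (Cauchy--Schwarz) is \emph{not} integrable in $\xi$, so domination does not come for free. Your sketch is completable, but the missing ingredient has to be named: for $z'$ ranging in a compact ball $K$ whose $r$-neighborhood $K_r\Subset\O_1$, the sub-mean-value property gives $\sup_{z'\in K}|\Bergman_1(z',\xi)|^2\le \frac{1}{V(B_r)}\int_{K_r}|\Bergman_1(u,\xi)|^2\,dV(u)=:G(\xi)$, and Fubini gives $\int_{\O_1}G\,dV=\frac{1}{V(B_r)}\int_{K_r}\Bergman_1(u,u)\,dV(u)<\infty$; Cauchy estimates then bound $|\p_X\Bergman_1(z',\xi)|$ by the same kind of $G$, producing an integrable dominating function for $\p_X P_1(z',\cdot)$ uniformly on $K$. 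Without this (or an equivalent argument) the interchange is unjustified as stated. Note that the paper's special-basis computation sidesteps the issue entirely: in the basis adapted to $(z,X)$ the identity $\p_X P_1(z,\xi)=\frac{\p_X s_1(z)}{s_0(z)}\,s_0(\xi)\ov{s_1(\xi)}$ holds pointwise, so \eqref{equ: E[dlogP]=0} reduces to $\langle s_0,s_1\rangle=0$ with no limit interchange anywhere.
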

    \begin{proof}
        Let $\{ s_j \}_{j=0}^{\infty}$ be a special orthonormal basis for $A^2(\O_1)$ with respect to $z \in \O_1$ and $X \in T_z^{1,0} \O_1$. 
        Also, let $\{ \sigma_j \}_{j=0}^{\infty}$ be a special orthonormal basis for $A^2(\O_2)$ with respect to $f(z) \in \O_2$ and $df(X) \in T_{f(z)}^{1,0} \O_2$. Then
        \begin{align*}
            \int_{\O_1} \partial_X P_1(z,\xi) dV(\xi)
            = \int_{\O_1} \frac{\partial_X s_1(z)}{s_0(z)} s_0(\xi)  \overline{s_1(\xi)} dV(\xi)
            = \frac{\partial_X s_1(z)}{s_0(z)} \int_{\O_1} s_0(\xi)  \overline{s_1(\xi)} dV(\xi)
            = 0.\\
        \end{align*}

        For the second equation, we first analyze the assumption. If we express it in terms of special orthonormal bases, then 
        \begin{align*}
            & \int_{\O_1} \left| \partial_X \log P_2(f(z), f(\xi)) \right|^2 P_1(z,\xi) dV(\xi) \\
            =&  \frac{|\partial_X \sigma_0(f(z))|^2}{|\sigma_0(f(z))|^2} 
            \int_{\O_1} \frac{|\sigma_1(f(\xi))|^2}{|\sigma_0(f(\xi))|^2} |s_0(\xi)|^2 dV(\xi) < \infty.
        \end{align*}
        The function $S(\xi) := \frac{\sigma_1(f(\xi))}{\sigma_0(f(\xi))} s_0(\xi)$ is holomorphic on $\O_1 \setminus \{ \xi \in \O_1 : (\sigma_0 \circ f)(\xi) = 0 \}$ and $S \in L^2(\O_1)$. By the removable singularity theorem, $S \in A^2(\O_1)$, and hence it can be written as $S(\xi) = \sum_{j=0}^{\infty} a_j s_j(\xi)$ for some constant $a_j \in \CC$.
        Then
        \begin{align*}
            a_0 s_0(z) = S(z) = \frac{\sigma_1(f(z))}{\sigma_0(f(z))} s_0(z) = 0
        \end{align*}
        implies $a_0=0$, 
        and
        \begin{align*} 
            a_1 \partial_X s_1(z)  
            &= \partial_X S(z) 
            = \partial_X \sigma_1(f(z)) \cdot \frac{s_0(z)}{\sigma_0(f(z))} + \sigma_1(f(z)) \cdot \partial_X \left( \frac{s_0(z)}{\sigma_0(f(z))} \right)
        \end{align*}
        yields 
        \begin{align*}
            a_1 =  \frac{\partial_X \sigma_1(f(z))}{\sigma_0(f(z))} \cdot \frac{s_0(z)}{\partial_X s_1(z)}.
        \end{align*}
        Now 
        \begin{align*}
            \partial_{\overline{X}} \log \Bergman_2(f(\xi), f(z)) 
            &=  \frac{\overline{\partial_X \sigma_0(f(z))}}{\overline{\sigma_0(f(z))}}
            + \frac{\overline{\partial_X \sigma_1(f(z))}}{\overline{\sigma_0(f(z))}}
            \cdot \frac{\sigma_1(f(\xi))}{\sigma_0(f(\xi))},  \\
            \partial_X P_1(z,\xi)
            &= \frac{\partial_X s_1(z)}{s_0(z)} s_0(\xi) \overline{s_1(\xi)}
        \end{align*}
        imply that
        \begin{align*}
             & \int_{\O_1} \partial_{\overline{X}} \log \Bergman_2(f(\xi), f(z)) \, \partial_X P_1(z,\xi) dV(\xi) \\
             =& \frac{\overline{\partial_X \sigma_0(f(z))}}{\overline{\sigma_0(f(z))}} \frac{\partial_X s_1(z)}{s_0(z)} \int_{\O_1} s_0(\xi) \overline{s_1(\xi)} dV(\xi) 
             + \frac{\overline{\partial_X \sigma_1(f(z))}}{\overline{\sigma_0(f(z))}} \frac{\partial_X s_1(z)}{s_0(z)} \int_{\O_1} S(\xi) \overline{s_1(\xi)} dV(\xi) \\
             =& \frac{\overline{\partial_X \sigma_1(f(z))}}{\overline{\sigma_0(f(z))}} \frac{\partial_X s_1(z)}{s_0(z)} a_1 
             =  \frac{|\partial_X \sigma_1(f(z))|^2}{|\sigma_0(f(z))|^2} 
             = f^*g_{B_2}(X,X).
        \end{align*}
    \end{proof}

    \begin{rmk}
        The equation (\ref{equ: E[dlogP]=0}) in Lemma~\ref{lem:proof in main thm} appears to follow naturally from the fact that
        \begin{align*}
            \int_{\Omega_1} \partial_X P_1(z,\xi) dV(\xi) = \partial_X \int_{\Omega_1}  P_1(z,\xi) dV(\xi) = 0.
        \end{align*}
        However, to rigorously complete the argument, one must justify that the integration and differentiation are interchangeable. This is not trivial, as the off-diagonal Bergman kernel $\Bergman(z, \cdot)$ is not bounded in general (\cite{kerzman1971bergman}).
    \end{rmk}

    \begin{thm} \label{thm: applying Cauchy-Schwarz inequality}
        Let $\Omega_1 \subset \CC^n$ and $\Omega_2 \subset \CC^m$ be bounded domains and 
        $f: \O_1 \rightarrow \O_2$ be a holomorphic map.
        Then, for each $z \in \O_1$ and $X \in T_z^{1,0}\O_1$,
        \begin{align} \label{main inequality}
            |f^*g_{B_2} (X,X)|^2 \le \operatorname{Var}[\partial_X \log P_2(f(z), f(\cdot))] \, g_{B_1}(X,X).
        \end{align}
    \end{thm}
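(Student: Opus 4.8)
The plan is to realize both sides of the inequality as the ingredients of the Cauchy--Schwarz inequality (Lemma~\ref{lem: Cauchy-Schwarz for variables}) applied to two well-chosen random variables on the probability space $(\O_1, \Phi_1(z))$, where $\Phi_1(z) = P_1(z,\cdot)\,dV$. Concretely, for fixed $z \in \O_1$ and $X \in T_z^{1,0}\O_1$, I would set
\begin{align*}
    Z(\xi) := \p_X \log P_1(z, \xi), \qquad W(\xi) := \p_X \log P_2(f(z), f(\xi)),
\end{align*}
regarded as functions of $\xi$ in $L^2(\O_1, \Phi_1(z))$. With this choice the target inequality should read exactly as $|\Cov[Z,W]|^2 \le \Var[Z]\,\Var[W]$, so the whole proof reduces to matching the three statistical quantities with the three geometric ones.

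The two variances are the easy matches. Since $\Exp[Z] = 0$ by the first part of Lemma~\ref{lem:proof in main thm}, I have $\Var[Z] = \Exp[|Z|^2] = \int_{\O_1} |\p_X \log P_1(z,\xi)|^2 P_1(z,\xi)\, dV(\xi)$, which is precisely $g_{B_1}(X,X)$; this is the Fisher-equals-Bergman identity of Theorem~\ref{thm:[CY2023]Thm3.9}, and it can also be verified directly in a special orthonormal basis $\{s_j\}$ for $A^2(\O_1)$ at $z, X$, where $P_1(z,\xi) = |s_0(\xi)|^2$ and $\p_X\log P_1(z,\xi) = \tfrac{\p_X s_1(z)}{s_0(z)}\tfrac{\ov{s_1(\xi)}}{\ov{s_0(\xi)}}$ so that the integral collapses to $\tfrac{|\p_X s_1(z)|^2}{|s_0(z)|^2}$. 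Meanwhile $\Var[W]$ is by definition the quantity $\Var[\p_X \log P_2(f(z), f(\cdot))]$ on the right-hand side.

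The real content is the covariance identity $\Cov[Z,W] = f^*g_{B_2}(X,X)$, and this is the step I expect to be the main obstacle. Because $\Exp[Z] = 0$, I have $\Cov[Z,W] = \Exp[Z\ov W] = \int_{\O_1} \ov{W(\xi)}\, \p_X P_1(z,\xi)\, dV(\xi)$, using $Z\, P_1 = \p_X P_1$. The key algebraic fact to establish is that, after conjugating and using the Hermitian symmetry $\ov{\Bergman_2(a,b)} = \Bergman_2(b,a)$ together with the decomposition $\log P_2(w,\eta) = \log|\Bergman_2(w,\eta)|^2 - \log\Bergman_2(w,w)$ (whose anti-holomorphic-in-$w$ part is annihilated by the $(1,0)$ operator $\p_X$), one obtains
\begin{align*}
    \ov{W(\xi)} = \p_{\ov X}\log \Bergman_2(f(\xi), f(z)) - \ov{c},
\end{align*}
where $c$ is the holomorphic-slot logarithmic derivative of $\Bergman_2$ at the diagonal point $(f(z),f(z))$, a constant independent of $\xi$; in a special orthonormal basis $\{\sigma_j\}$ for $A^2(\O_2)$ at $f(z), df(X)$ this is simply $\ov{c} = \ov{\p_X\sigma_0(f(z))}/\ov{\sigma_0(f(z))}$. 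Substituting and splitting the integral, the constant term drops out since $\int_{\O_1}\p_X P_1(z,\xi)\, dV(\xi) = 0$ by Lemma~\ref{lem:proof in main thm}, while the remaining term equals $f^*g_{B_2}(X,X)$ by the second equation of the same lemma. Lemma~\ref{lem: Cauchy-Schwarz for variables} then yields the claim.

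Finally I would dispose of the two degenerate situations in which Lemma~\ref{lem:proof in main thm} does not directly apply. If $df(X) = 0$, then $f^*g_{B_2}(X,X) = g_{B_2}(df(X), df(X)) = 0$ and the inequality is trivial. If $\Var[W] = \infty$, then $X \neq 0$ (otherwise $W \equiv 0$), so $g_{B_1}(X,X) > 0$ and the right-hand side is $+\infty$, while the left-hand side $|f^*g_{B_2}(X,X)|^2$ is finite; hence the inequality holds. In the remaining case $df(X) \neq 0$ and $\Var[W] < \infty$, the finiteness guarantees $Z, W \in L^2(\O_1, \Phi_1(z))$, which legitimizes the application of Cauchy--Schwarz carried out above.
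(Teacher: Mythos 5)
Your proposal is correct and follows essentially the same route as the paper's proof: the same choice of random variables $Z = \p_X \log P_1(z,\cdot)$ and $W = \p_X \log P_2(f(z), f(\cdot))$, the same identifications $\Var[Z] = g_{B_1}(X,X)$ (via $\Exp[Z]=0$ and Theorem~\ref{thm:[CY2023]Thm3.9}) and $\Cov[Z,W] = f^*g_{B_2}(X,X)$ (via the decomposition of $\ov{W}$ into $\p_{\ov X}\log\Bergman_2(f(\xi),f(z))$ minus a $\xi$-independent diagonal term, handled by the two equations of Lemma~\ref{lem:proof in main thm}), followed by the Cauchy--Schwarz inequality of Lemma~\ref{lem: Cauchy-Schwarz for variables}. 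Your explicit treatment of the degenerate cases $df(X)=0$ and $\Var[W]=\infty$ merely spells out what the paper dispatches with ``otherwise, the inequality always holds.''
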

    \begin{proof}
        We may assume that $\operatorname{Var}[\partial_X \log P_2(f(z), f(\cdot))] < \infty$ and $df(X) \neq 0$; otherwise, the inequality always holds.
        Let $Z = \p_X \log{P_1(z,\xi)}$ and $W = \p_X \log{P_2 (f(z),f(\xi))}$. Note that, from $\Exp[Z]=0$ (by (\ref{equ: E[dlogP]=0})) and Theorem~\ref{thm:[CY2023]Thm3.9}, we have
        \begin{align*}
            \operatorname{Var}\left[ Z \right] 
            = \int_{\O_1} |\p_X \log{P_1(z,\xi)}|^2  P_1 dV(\xi)
            = g_{B_1}(X,X).
        \end{align*}
        Moreover, by Lemma~\ref{lem:proof in main thm},
        \begin{align*}
            \operatorname{Cov}\left[ Z, W  \right]
            =& \int_{\O_1} \partial_{\overline{X}} \log P_2(f(z), f(\xi)) \, \partial_X P_1 dV(\xi)  \\
            =& \int_{\O_1} \partial_{\overline{X}} \log \Bergman_2(f(\xi), f(z)) \, \partial_X P_1 dV(\xi)
            -\int_{\O_1} \partial_{\overline{X}} \log \Bergman_2(f(z), f(z)) \, \partial_X P_1 dV(\xi)  \\
            =& f^*g_{B_2} (X,X). 
        \end{align*}
        Applying the Cauchy-Schwarz inequality (Lemma~\ref{lem: Cauchy-Schwarz for variables}) for $Z$ and $W$ gives the desired inequality.      
    \end{proof}

    \begin{thm} \label{thm: when the equality holds}
        The following are equivalent.
        \begin{enumerate}
            \item[(a)] The equality holds in (\ref{main inequality}) for all $z \in \O_1$ and $X \in T^{1,0}_z\O_1$.
            \item[(b)] For all $z, \xi \in \Omega_1$ and $X \in T^{1,0}_z\O_1$ with $P_1(z,\xi) \neq 0$ and $P_2(f(z), f(\xi)) \neq 0$,
            $$\p_{X} \log{P_2}(f(z),f(\xi)) = \lambda \, \p_{X} \log{P_1}(z,\xi)$$ 
            for some constant $\lambda \ge 0$.
            \item[(c)] For all $z \in \O_1$ and $X \in T^{1,0}_z\O_1$,
            $$f^* g_{B_2}(X,X) = \lambda \, g_{B_1}(X,X)$$ 
            for some constant $\lambda \ge 0$, i.e., $f$ is a $\lambda$--isometry.
            \item[(d)] For all $z, \xi \in \Omega_1$,
            $$\frac{|\Bergman_2(f(z),f(\xi))|^2}{\Bergman_2(f(z),f(z))\Bergman_2(f(\xi),f(\xi))} = \left(\frac{|\Bergman_1(z,\xi)|^2}{\Bergman_1(z,z) \Bergman_1(\xi,\xi)}\right)^{\lambda}$$ 
            for some constant $\lambda \ge 0$.
        \end{enumerate}
        When one of above statements holds, 
        \begin{enumerate}
        	\item $\lambda = 0 $ if and only if $f$ is a constant map, and
        	\item $\lambda \neq 0$ if and only if $f$ is injective.
        \end{enumerate}
    \end{thm}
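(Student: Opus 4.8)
My plan is to prove the cycle (a)$\Rightarrow$(b)$\Rightarrow$(c)$\Rightarrow$(d)$\Rightarrow$(b) together with the easy (b)$\Rightarrow$(a), so that (a)--(d) become equivalent, and then to read off the two supplementary statements from (c) and (d). I keep the notation $Z=\p_X\log P_1(z,\cdot)$ and $W=\p_X\log P_2(f(z),f(\cdot))$ from the proof of Theorem~\ref{thm: applying Cauchy-Schwarz inequality}, where it is recorded that $\Exp[Z]=0$, $\Var[Z]=g_{B_1}(X,X)$ and $\Cov[Z,W]=f^*g_{B_2}(X,X)$; the same special-basis computation as in Lemma~\ref{lem:proof in main thm} (the point being $\langle S,s_0\rangle=0$) also gives $\Exp[W]=0$. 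It is convenient to write $R_\O(z,\xi):=|\Bergman_{\O}(z,\xi)|^2/(\Bergman_{\O}(z,z)\Bergman_{\O}(\xi,\xi))$, so that (d) reads $R_{\O_2}(f(z),f(\xi))=R_{\O_1}(z,\xi)^{\lambda}$, while $R_{\O}(z,z)=1$ and $\p_X\log R_{\O_i}=\p_X\log P_i$ because the factor $\Bergman_{\O_i}(\xi,\xi)$ is independent of $z$.

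Since $\Exp[Z]=\Exp[W]=0$, the equality case of Lemma~\ref{lem: Cauchy-Schwarz for variables} says that equality holds in (\ref{main inequality}) at $(z,X)$ exactly when $Z$ and $W$ are linearly dependent as functions of $\xi$; as $\Var[Z]=g_{B_1}(X,X)>0$ forces $Z\not\equiv 0$, the dependence may be written $W(z,\cdot)=\mu\,Z(z,\cdot)$ for some $\mu=\mu(z,X)\in\CC$. Pairing with $Z$ and using the three identities above yields $f^*g_{B_2}(X,X)=\ov{\mu}\,g_{B_1}(X,X)$; both metric values being real with $g_{B_1}(X,X)>0$, we get $\mu(z,X)=f^*g_{B_2}(X,X)/g_{B_1}(X,X)\ge 0$. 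Read pointwise where $P_1,P_2\neq 0$ (both sides are real-analytic in $\xi$), the relation $W=\mu Z$ is exactly (b), except that $\mu$ must still be shown to be a constant. The reverse implication (b)$\Rightarrow$(a) is immediate, since $W=\lambda Z$ makes $Z,W$ proportional.

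The crux, and the step I expect to be the main obstacle, is therefore that $\mu$ is a genuine constant. I would first fix $z$ and prove $\mu$ is independent of $X$: writing $\alpha_\xi,\beta_\xi\in(\CC^n)^*$ for the holomorphic $z$-differentials of $\log R_{\O_1}(z,\xi)$ and $\log R_{\O_2}(f(z),f(\xi))$, the relation becomes $\beta_\xi(X)=\mu(z,X)\alpha_\xi(X)$ for all $\xi,X$, with both sides linear in $X$; since $g_{B_1}(X,X)=\int_{\O_1}|\alpha_\xi(X)|^2P_1\,dV>0$ for $X\neq 0$, the covectors $\{\alpha_\xi\}_\xi$ span $(\CC^n)^*$, and a short bilinearity argument then forces $\mu(z,X)=\mu(z)$ and $\beta_\xi=\mu(z)\alpha_\xi$. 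To see $\mu$ is locally constant, note that $u_\xi:=\log R_{\O_2}(f(\cdot),f(\xi))$ and $v_\xi:=\log R_{\O_1}(\cdot,\xi)$ are real, so $\p u_\xi=\mu\,\p v_\xi$ gives $du_\xi=\mu\,dv_\xi$ and hence $d\mu\wedge dv_\xi=0$ for every $\xi$. If $d\mu(z_0)\neq 0$, then every $\alpha_\xi(z_0)$ would be a real multiple of the fixed covector $\p\mu(z_0)$; but in a special basis $\alpha_\xi(z_0)(X)$ equals a nonzero constant times $\ov{(s_1/s_0)(\xi)}$, and $s_1/s_0$ is a nonconstant holomorphic function (as $s_0\perp s_1$), so by the open mapping theorem its values cannot lie in a real line. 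This contradiction gives $d\mu\equiv 0$, so $\mu\equiv\lambda$ is constant on the connected $\O_1$, completing (a)$\Rightarrow$(b).

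For the remaining implications, (b)$\Rightarrow$(c) is the pairing read with constant $\lambda$: $f^*g_{B_2}(X,X)=\Cov[Z,W]=\lambda\,\Var[Z]=\lambda\,g_{B_1}(X,X)$. For (c)$\Rightarrow$(d) I would polarize: (c) says $\psi(z):=\log\Bergman_2(f(z),f(z))-\lambda\log\Bergman_1(z,z)$ is pluriharmonic, so the mixed derivative $\p_{z_j}\p_{\ov{\xi}_k}G$ of $G(z,\xi):=\log\Bergman_2(f(z),f(\xi))-\lambda\log\Bergman_1(z,\xi)$ vanishes on the diagonal; since a function holomorphic in $z$ and anti-holomorphic in $\xi$ that vanishes on the diagonal is identically zero, $G$ splits as $a(z)+b(\xi)$, and reality of $\psi=G|_{\mathrm{diag}}$ forces $b=\ov{a}+\mathrm{const}$. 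Substituting this into $\log R_{\O_2}(f(z),f(\xi))-\lambda\log R_{\O_1}(z,\xi)=G(z,\xi)+\ov{G(z,\xi)}-\psi(z)-\psi(\xi)$ collapses the right side to $0$, which is (d) near the diagonal and then everywhere by analytic continuation; finally (d)$\Rightarrow$(b) follows by applying $\p_X$ to the logarithm of (d), the $\xi$-only terms dropping out. For the last two assertions I argue from (c) and (d): by (c), $\lambda=0\iff f^*g_{B_2}\equiv 0\iff df\equiv 0\iff f$ is constant, using positive-definiteness of $g_{B_2}$ and connectedness of $\O_1$; and since the coordinate functions lie in $A^2$ of a bounded domain and separate points, Lemma~\ref{lem: A^2 separates points} gives $R_{\O}(p,q)=1\iff p=q$, so if $\lambda\neq 0$ and $f(z)=f(\xi)$ the left side of (d) equals $1$, forcing $R_{\O_1}(z,\xi)=1$ and hence $z=\xi$; thus $\lambda\neq 0$ implies $f$ injective, while the converse is the contrapositive of the first assertion.
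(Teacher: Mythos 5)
Your proposal is correct, and it reproduces the paper's overall skeleton exactly --- the cycle $(a)\Leftrightarrow(b)$, $(b)\Rightarrow(c)\Rightarrow(d)\Rightarrow(b)$, the same polarization of the pluriharmonic potential for $(c)\Rightarrow(d)$ (your splitting $G(z,\xi)=a(z)+b(\xi)$ with $b=\ov{a}+\mathrm{const}$ is the paper's local $\varphi(z)+\ov{\varphi(\xi)}$, with the same appeal to real-analytic continuation off the diagonal, carrying the same standard subtleties at the zeros of the kernels), and the same use of Lemma~\ref{lem: A^2 separates points} for the supplementary claims --- but you treat the crux of $(a)\Rightarrow(b)$, constancy of the proportionality factor, by genuinely different means. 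The paper differentiates the relation in $\ov{X}_j$ and invokes the immersion property of $\Phi$ to get $\p\lambda/\p\ov{X}_j=0$, so $\lambda(z,\cdot)$ is holomorphic on the compact $\CC\PP^{n-1}$ and hence constant in $X$; it then obtains realness of $\lambda(z)$ by differentiating at $\xi=z$, and constancy in $z$ by applying $\p_{\ov{X}}$, which shows $\lambda(z)$ is holomorphic and real, hence constant. You instead (i) obtain $\mu(z,X)=f^*g_{B_2}(X,X)/g_{B_1}(X,X)\ge 0$ immediately by pairing against $Z$, which the paper extracts less directly; (ii) prove $X$-independence by pure algebra --- the identity $\beta_\xi\alpha_{\xi'}=\alpha_\xi\beta_{\xi'}$ of products of linear forms, unique factorization, and the spanning of $\{\alpha_\xi\}$ (the same content as the immersion property, in the form $\Var[Z]=g_{B_1}(X,X)>0$) --- which has the concrete advantage of not presupposing smoothness of $\mu$ in $X$, a regularity the paper's computation of $\p\lambda/\p\ov{X}_j$ quietly assumes; and (iii) prove $z$-independence via $d\mu\wedge dv_\xi=0$ together with the open mapping theorem applied to the nonconstant $s_1/s_0$ from the special basis, where the paper's route is shorter but yours is equally valid and makes the mechanism (all the covectors $\alpha_\xi(z_0)$ would be pinned to a real line) more visible. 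Your converse in the supplementary part (injective $\Rightarrow$ nonconstant $\Rightarrow\lambda\neq 0$ via assertion~(1)) is also slightly slicker than the paper's contradiction through $(d)$. For a final write-up you should spell out the ``short bilinearity argument'' (the unique-factorization step above, with $n=1$ trivial by homogeneity) and remark that, at fixed $z_0$, the proportionality $\alpha_\xi(z_0)=c_\xi\,\p\mu(z_0)$ is available for all $\xi$ off the zero sets of $\Bergman_1(z_0,\cdot)$, $\Bergman_2(f(z_0),f(\cdot))$ and $s_0$, whose complement is the complement of an analytic set, hence open, dense and connected, so $s_1/s_0$ remains nonconstant there and the open mapping theorem applies.
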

    \begin{proof}
        We prove the equivalence of four statements by showing $(a) \Leftrightarrow (b)$ and $(b) \Rightarrow (c) \Rightarrow (d) \Rightarrow (b)$. \\

        \noindent
        $(a) \Leftrightarrow (b):$
        Assume that $|f^*g_{B_2}(X,X)|^2 = g_{B_1}(X,X) \, \operatorname{Var}[W]$ holds for all $z \in \Omega_1$ and $X \in T^{1,0}_z\O_1$ with $\p_X = \sum_{j=1}^n X_j \frac{\p}{\p z_j}$. From the Cauchy-Schwarz inequality (Lemma~\ref{lem: Cauchy-Schwarz for variables}), we know that
        \begin{equation} \label{CSequality}
        	\p_{X} \log{P_2}(f(z),f(\xi)) = \lambda(z, X_1, \dots, X_n) \, \p_{X} \log{P_1}(z,\xi)
        \end{equation}
        holds for all $z, \xi \in \Omega_1$ and $X \in T^{1,0}_z\O_1$ such that $P_1(z,\xi) \neq 0$ and $P_2(f(z), f(\xi)) \neq 0$, 
        where $\lambda: \O_1 \times \CC^n \rightarrow \CC$. Since $\lambda(z,X) = \lambda(z, \alpha X)$ for any $\alpha \in \CC^*$, one has $\lambda: \Omega_1 \times \CC\PP^{n-1} \to \CC$. 
        
        Now, differentiating (\ref{CSequality}) by $\frac{\p}{\p \overline{X}_j} \in T^{0,1}_{X} (T^{1,0}_z \O_1)$ yields
        \begin{align*}
            \frac{\p \lambda}{\p \overline{X}_j} \cdot \p_X \log P_1(z,\xi) = 0 
        \end{align*} 
        for all $j=1, \dots, n$.
        Suppose that there exist $z \in \O_1$ and $X \in T^{1,0}_z \O_1$ such that $\p_X \log P_1(z, \cdot) = 0$. Then this contradicts to the map $\Phi: \O_1 \rightarrow \Prob(\O_1)$ is an immersion by Theorem~\ref{thm:[CY2023]Thm3.9}. Therefore, for each $z \in \O_1$ and $X \in T^{1,0}_z \O_1$, there exists $\xi \in \O_1$ such that $\p_X \log P_1(z, \xi) \neq 0$, which implies that $\frac{\p \lambda}{\p \overline{X}_j} = 0$ for all $j=1, \dots, n$. Hence, $\lambda(z, \cdot)$ is a holomorphic function on $\CC\PP^{n-1}$ and a constant function. Write $\lambda(z,X) = \lambda(z)$.  
        On the other hand, differentiating (\ref{CSequality}) by $\sum_{j=1}^n X_j \left. \frac{\p}{\p \xi_j} \right|_{\xi=z}$ shows that
        \begin{equation} \label{metricProportional}
        	f^*g_{B_2}(X,X) = \lambda(z) g_{B_1}(X,X)
        \end{equation}
        holds for all $X \in \CC^n$ and thus $\lambda(z) \in [0,\infty)$. By taking $\p_{\overline{X}} = \sum_{j=1}^n \overline{X}_j \frac{\p}{\p \overline{z}_j} $ on both sides of (\ref{CSequality}), one obtains
        $$
        -f^*g_2(X,X) = \p_{\overline{X}} \lambda(z) \, \p_{X} \log{P_1}(z,\xi) - \lambda(z) g_1(X,X).
        $$
        Combining it with (\ref{metricProportional}) yields that $\lambda(z)$ is holomorphic in $z$. Since the image lies on the real line, $\lambda(z)$ must be a constant. 
        The converse implication is immediate because  $\p_{X} \log{P_2}(f(z),f(\cdot))$  and $\p_{X} \log{P_1}(z, \cdot)$ are linearly dependent.  \\

        \noindent
        $(b) \Rightarrow (c):$
        It follows by taking $\p_{\overline{X}} = \sum_{j=1}^n \overline{X}_j \frac{\p}{\p \overline{z}_j} $ on both sides of
        $$\p_{X} \log{P_2}(f(z),f(\xi)) = \lambda \, \p_{X} \log{P_1}(z,\xi).$$ 

        \noindent
        $(c) \Rightarrow (d):$

        From the condition $f^* g_{B2} = \lambda g_{B1}$, we have
        $$ \partial \overline{\partial} \log \Bergman_2(f(z),f(z)) 
        - \lambda \partial \overline{\partial} \log \Bergman_1(z,z) = 0 $$
        for all $z \in \O_1$. 
        Then there exist a (simply-connected) open neighborhood $U \subset \O_1$ and a holomorphic function $\varphi$ on $U$ such that
        \begin{equation} \label{3.6}
            \log \Bergman_2(f(z),f(z)) 
            - \lambda \log \Bergman_1(z,z) = \varphi(z) + \overline{\varphi(z)}
        \end{equation}
        for all $z \in U$ and hence 
        $$ \log \Bergman_2(f(z),f(\xi)) 
        - \lambda \log \Bergman_1(z,\xi) = \varphi(z) + \overline{\varphi(\xi)}
        $$
        for all $z, \xi \in U$ (by shrinking $U$ if necessary).
        Furthermore, 
        \begin{equation} \label{1.2}
            \log \Poisson_2(f(z),f(\xi)) 
            - \lambda \log \Poisson_1(z,\xi) = \varphi(\xi) + \overline{\varphi(\xi)}
        \end{equation} 
        for all $z, \xi \in U$.
        Denote the left side of (\ref{3.6}) (after replacing the $z$ variable by $\xi$) and the left side of (\ref{1.2}) by $A(\xi)$ and $B(z,\xi)$, respectively. Then both $\exp (A(\xi))$ and $\exp (B(z,\xi))$ are well-defined on the whole domain $\O_1 \times \O_1$.
        Since they are real-analytic functions and coincide on $U \times U$, we have 
        \begin{align*}
            \frac{|\Bergman_2(f(z),f(\xi))|^2}{\Bergman_2(f(z),f(z))\Bergman_2(f(\xi),f(\xi))} = \left(\frac{|\Bergman_1(z,\xi)|^2}{\Bergman_1(z,z) \Bergman_1(\xi,\xi)}\right)^{\lambda}
        \end{align*}
        for all $z, \xi \in \O_1$. \\
        
        \noindent
        $(d) \Rightarrow (b):$
        It follows by taking $\p_{X} = \sum_{j=1}^n X_j \frac{\p}{\p z_j} $ on both sides of
        \begin{align*}
            \log \left( \frac{|\Bergman_2(f(z),f(\xi))|^2}{\Bergman_2(f(z),f(z))\Bergman_2(f(\xi),f(\xi))} \right) = \lambda \log \left( \frac{|\Bergman_1(z,\xi)|^2}{\Bergman_1(z,z) \Bergman_1(\xi,\xi)} \right). \\
        \end{align*}

        Now, suppose that one of above statements holds. 
        First, from the condition $(c)$, $\lambda = 0 $ if and only if $f$ is a constant map because $g_{B_1}$ and $g_{B_2}$ are positive definite.
        Second, if $\lambda \neq 0$ and $f(z_1)=f(z_2)$ for some $z_1, z_2 \in \O_1$, the condition $(d)$ implies that 
        $$ 1 = \left(\frac{|\Bergman_1(z_1, z_2)|^2}{\Bergman_1(z_1,z_1) \Bergman_1(z_2, z_2)}\right)^{\lambda}. $$
        Hence, $z_1=z_2$ by Lemma~\ref{lem: A^2 separates points}. Note that, for a bounded domain $\O_1 \in \CC^n$, $A^2(\O_1)$ always separates points. 
        Conversely, for the sake of contradiction, assume that $\lambda = 0$. Then, for $z_1 \neq z_2 \in \O_1$, the condition $(d)$ says that 
        $$ \frac{|\Bergman_2(f(z_1),f(z_2))|^2}{\Bergman_2(f(z_1),f(z_1))\Bergman_2(f(z_2),f(z_2))} = 1 $$
        This is impossible because $f(z_1) \neq f(z_2)$.
    \end{proof}

    \begin{rmk}
        Theorem~\ref{thm: when the equality holds} says that if $f$ is a biholomorphism, then the equality holds in (\ref{main inequality}).
        Conversely, if the equality holds in (\ref{main inequality}) with a non-constant holomorphic map $f$, then $f$ is injective. 
        In fact, this result is optimal in the following sense: consider $f:  \DD \rightarrow \DD^2$ defined by $f(z) = (z,0)$, where $\DD$ denotes the unit disc in $\CC$. Then $f^* g_{\DD^2} = g_{\DD}$.
        When $n=m$, with additional conditions that $g_{B_1}$ and $g_{B_2}$ are complete, one can show that the equality holds in (\ref{main inequality}) implies $f$ is also surjective by \cite[Theorem 2.3.2]{mok2011geometry}, hence $f$ is a biholomorphism.
        For further details on holomorphic $\lambda$--isometries with respect to the Bergman metric, we refer the reader to \cite{mok2011geometry}, \cite{mok2012extension}, and \cite{yum2025bergman}.
    \end{rmk}

    Now we prove Theorem~\ref{mainthm: thm A}.
    Consider bounded domains $(\O_1, g_{B_1})$ and $(\O_2, g_{B_2})$ in $\CC^n$ and $\CC^m$ with Bergman metrics, respectively.
    Recall that $P_1$ and $P_2$ are the density functions of the Bergman statistical models for $\O_1$ and $\O_2$.
    Let $f:\O_1 \rightarrow \O_2$ be a holomorphic map.
    
     \begin{lem} \label{lem: Var[dlogP_2]}
        For fixed $z \in \O_1$ and $X \in T^{1,0}_z \O_1$, assume that $\operatorname{Var}[W] < \infty$, where $W =  \partial_{X} \log P_2(f(z), f(\xi))$.
        Then $\Exp[W] = 0$.
    \end{lem}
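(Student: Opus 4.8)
The plan is to run the same special-orthonormal-basis computation that proves Lemma~\ref{lem:proof in main thm}. Conceptually, $\Exp[W]=0$ is the familiar information-geometric identity that the score function has vanishing mean: one would like to say $\Exp[W]=\int_{\O_1}\p_X P_2(f(z),f(\xi))\,dV(\xi)=\p_X\!\int_{\O_1}P_2(f(z),f(\xi))\,dV(\xi)$ and conclude. As the remark following Lemma~\ref{lem:proof in main thm} stresses, however, exchanging $\p_X$ with the integral cannot be justified directly, because the off-diagonal Bergman kernel is in general unbounded. So rather than differentiating under the integral I would compute $W$ and $P_1$ explicitly in adapted bases.

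First I would discard the degenerate case: if $df(X)=0$ then $W\equiv0$ by the chain rule and $\Exp[W]=0$ trivially, so assume $df(X)\neq0$. Choose a special orthonormal basis $\{s_j\}_{j=0}^{\infty}$ for $A^2(\O_1)$ with respect to $z$ and $X$, and a special orthonormal basis $\{\sigma_j\}_{j=0}^{\infty}$ for $A^2(\O_2)$ with respect to $f(z)$ and $df(X)$. Writing $\log P_2(f(z),f(\xi))=\log|\Bergman_2(f(z),f(\xi))|^2-\log\Bergman_2(f(z),f(z))$ and using that $\p_X$ kills the anti-holomorphic factor (as $f$ is holomorphic), the vanishing relations $\sigma_j(f(z))=0$ for $j\ge1$ and $\p_X\sigma_j(f(z))=0$ for $j\ge2$ reduce the expression to
\begin{align*}
    W=\p_X\log P_2(f(z),f(\xi))=\frac{\p_X\sigma_1(f(z))}{\sigma_0(f(z))}\cdot\frac{\ov{\sigma_1(f(\xi))}}{\ov{\sigma_0(f(\xi))}}.
\end{align*}
On the $\O_1$ side the same kind of basis identities give $\Bergman_1(z,\xi)=s_0(z)\ov{s_0(\xi)}$ and $\Bergman_1(z,z)=|s_0(z)|^2$, hence $P_1(z,\xi)=|s_0(\xi)|^2$.

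With these formulas in hand I would substitute and pull out the factor that does not depend on the integration variable:
\begin{align*}
    \Exp[W]
    &=\int_{\O_1}W\,P_1(z,\xi)\,dV(\xi)
    =\frac{\p_X\sigma_1(f(z))}{\sigma_0(f(z))}\int_{\O_1}s_0(\xi)\,\ov{S(\xi)}\,dV(\xi) \\
    &=\frac{\p_X\sigma_1(f(z))}{\sigma_0(f(z))}\,\langle s_0,S\rangle,
\end{align*}
where $S(\xi):=\frac{\sigma_1(f(\xi))}{\sigma_0(f(\xi))}\,s_0(\xi)$. Expanding $S=\sum_{j}a_j s_j$, the value $S(z)=\frac{\sigma_1(f(z))}{\sigma_0(f(z))}s_0(z)=0$ (since $\sigma_1(f(z))=0$) forces $a_0=0$, so $\langle s_0,S\rangle=\ov{a_0}=0$ and therefore $\Exp[W]=0$.

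The step I expect to be the main obstacle is justifying that $S\in A^2(\O_1)$, which is what legitimizes the basis expansion of $S$ used above. A priori $S$ is holomorphic only off the zero set of $\sigma_0\circ f$; the hypothesis $\operatorname{Var}[W]<\infty$ is precisely the statement that $\int_{\O_1}|S|^2\,dV<\infty$ (up to the nonzero constant $|\p_X\sigma_1(f(z))/\sigma_0(f(z))|^2$), and feeding this into the removable singularity theorem promotes $S$ to an element of $A^2(\O_1)$. This is the identical technical point already settled in the proof of Lemma~\ref{lem:proof in main thm}, so I would cite that argument instead of repeating it.
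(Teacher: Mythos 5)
Your proof is correct, but it takes a genuinely different route from the paper's. The paper proves Lemma~\ref{lem: Var[dlogP_2]} in a few lines by writing $W = \p_X \log \Bergman_2(f(z), f(\xi)) - \p_X \log \Bergman_2(f(z), f(z))$ and applying the reproducing property of the Poisson--Bergman kernel (Proposition~\ref{prop: Poisson reproducing}) to the anti-holomorphic function $\p_X \log \Bergman_2(f(z), f(\cdot))$, the two terms then cancelling; the hypothesis $\operatorname{Var}[W] < \infty$ is cited as exactly the integrability condition $\ov{\p_X \log \Bergman_2(f(z), f(\cdot))\, \Bergman_1(z, \cdot)} \in A^2(\O_1)$ that Proposition~\ref{prop: Poisson reproducing} requires. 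You instead re-run the special-orthonormal-basis machinery of Lemma~\ref{lem:proof in main thm}: your reduction $W = \frac{\p_X \sigma_1(f(z))}{\sigma_0(f(z))} \cdot \frac{\ov{\sigma_1(f(\xi))}}{\ov{\sigma_0(f(\xi))}}$ is correct (valid off the measure-zero zero set of $\Bergman_2(f(z), f(\cdot))$, which is harmless for the expectation), as is $P_1(z,\xi) = |s_0(\xi)|^2$, and you handle the two delicate points properly: the degenerate case $df(X) = 0$, which must be excluded for the basis $\{\sigma_j\}$ to exist and in which $W \equiv 0$ trivially, and the promotion of $S(\xi) = \frac{\sigma_1(f(\xi))}{\sigma_0(f(\xi))} s_0(\xi)$ to $A^2(\O_1)$ by the removable singularity theorem, which is precisely where $\operatorname{Var}[W] < \infty$ enters. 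In effect you re-prove, by hand, the one instance of Proposition~\ref{prop: Poisson reproducing} that the paper invokes: your argument is more self-contained (it does not import the reproducing property from \cite{cho2023statistical}) at the cost of repeating machinery, and since your $a_0 = 0$ computation already appears verbatim inside the proof of Lemma~\ref{lem:proof in main thm}, the lemma essentially drops out of that proof as a byproduct. Incidentally, your explicit formula for $W$ carries the correct factor $\p_X \sigma_1(f(z))$, whereas the first display in the paper's proof of Lemma~\ref{lem:proof in main thm} misprints this prefactor as $\p_X \sigma_0(f(z))$.
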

    \begin{proof}
        We have 
        \begin{align*}
            \operatorname{E}[W] 
            =& \int_{\O_1} \partial_{X} \log P_2(f(z), f(\xi)) P_1 dV(\xi) \\
            =& \int_{\O_1} \partial_{X} \log \Bergman_2(f(z), f(\xi))  P_1 dV(\xi)
            -\int_{\O_1} \partial_{X} \log \Bergman_2(f(z), f(z))  P_1 dV(\xi)  \\
            =& \int_{\O_1} \partial_{X} \log \Bergman_2(f(z), f(\xi))  P_1 dV(\xi)
            - \partial_{X} \log \Bergman_2(f(z), f(z)) = 0
        \end{align*}
        by Proposition~\ref{prop: Poisson reproducing} provided that $ \partial_{X} \log \Bergman_2(f(z), f(\cdot)) \Bergman_1(z, \cdot) \in L^2(\O_1)$, which is equivalent to $\operatorname{Var}[W] < \infty$.
    \end{proof}

    \begin{thm} \label{thm: Schwarz lemma for Bergman metric}
        Suppose that there exists a constant $C > 0$ such that 
        $$ \sup_{w,\zeta \in \O_2} |\p_w \log P_2 (w, \zeta)|^2_{g_{B_2}} \le C, $$
        where $\partial_w \log P_2$ is regarded as a $1$-form with respect to the $w$ variable. 
        Then
        for any holomorphic map $f:\O_1 \rightarrow \O_2$, 
        $$
            f^*g_{B_2} \le C g_{B_1}.
        $$
    \end{thm}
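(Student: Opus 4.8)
The plan is to deduce the bound $f^*g_{B_2} \le C\,g_{B_1}$ from the Cauchy--Schwarz estimate of Theorem~\ref{thm: applying Cauchy-Schwarz inequality} by controlling the variance term $\operatorname{Var}[W]$, with $W = \p_X \log P_2(f(z), f(\cdot))$, using the hypothesis. Fix $z \in \O_1$ and $X \in T^{1,0}_z\O_1$; I may assume $df(X) \neq 0$, since otherwise $f^*g_{B_2}(X,X) = 0$ and the desired inequality is trivial. Since the variance never exceeds the second moment,
$$\operatorname{Var}[W] \le \Exp[|W|^2] = \int_{\O_1} |\p_X \log P_2(f(z), f(\xi))|^2\, P_1(z,\xi)\, dV(\xi),$$
so it suffices to estimate the integrand pointwise in $\xi$ and then integrate against the probability density $P_1(z,\cdot)$.

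The heart of the argument is a chain-rule-plus-Cauchy--Schwarz computation at each $\xi$. Setting $Y := df(X) \in T^{1,0}_{f(z)}\O_2$, the chain rule identifies $\p_X \log P_2(f(z), f(\xi))$ with the value of the $(1,0)$-form $\p_w \log P_2(\,\cdot\,, f(\xi))$ at $w = f(z)$ paired with $Y$. The standard Cauchy--Schwarz inequality for the Hermitian metric $g_{B_2}$---bounding the pairing of a $(1,0)$-form with a tangent vector by the product of their respective $g_{B_2}$-norms---then gives
$$|\p_X \log P_2(f(z), f(\xi))|^2 \le g_{B_2}(Y, Y)\, |\p_w \log P_2(f(z), f(\xi))|^2_{g_{B_2}} \le C\, f^*g_{B_2}(X,X),$$
where the last step uses the hypothesis together with $g_{B_2}(df(X), df(X)) = f^*g_{B_2}(X,X)$.

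Integrating this uniform bound against $P_1(z, \cdot)$ and using $\int_{\O_1} P_1(z,\xi)\, dV(\xi) = 1$ yields $\operatorname{Var}[W] \le C\, f^*g_{B_2}(X,X)$. Feeding this into Theorem~\ref{thm: applying Cauchy-Schwarz inequality} gives $(f^*g_{B_2}(X,X))^2 \le C\, f^*g_{B_2}(X,X)\, g_{B_1}(X,X)$, and since $f^*g_{B_2}(X,X) \ge 0$ is real, dividing by it when positive (the inequality being trivial when it vanishes) produces $f^*g_{B_2}(X,X) \le C\, g_{B_1}(X,X)$ for every $X$, as claimed. I expect the main obstacle to be the pointwise estimate: one must correctly read $\p_X \log P_2(f(z), f(\cdot))$ as the contraction of the $w$-gradient $(1,0)$-form with $df(X)$ and apply Cauchy--Schwarz in its dual-metric formulation, so that the hypothesis---phrased as an upper bound on the $g_{B_2}$-norm of a $1$-form---applies directly; matching the two notions of norm (on forms versus on vectors) is the one place where care is needed.
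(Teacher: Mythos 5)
Your proof is correct and follows essentially the same route as the paper: both reduce the theorem to Theorem~\ref{thm: applying Cauchy-Schwarz inequality} and bound $\operatorname{Var}[W]$ by $C\,f^*g_{B_2}(X,X)$ via the same pointwise dual-norm estimate $|\partial_{df(X)}\log P_2(w,\zeta)|^2 \le g_{B_2}(df(X),df(X))\,\big|\partial_w \log P_2(w,\zeta)\big|^2_{g_{B_2}} \le C\, f^*g_{B_2}(X,X)$, your direct integration over $\O_1$ being equivalent to the paper's rewriting of the integral through the push-forward measure $\kappa$ on $f(\O_1)$. The one (harmless, indeed streamlining) deviation is that you use only the trivial inequality $\operatorname{Var}[W]\le \Exp[|W|^2]$, whereas the paper additionally invokes Lemma~\ref{lem: Var[dlogP_2]} (and thus the reproducing property) to obtain $\Exp[W]=0$ and the equality $\operatorname{Var}[W]=\Exp[|W|^2]$, which is not actually needed for the upper bound.
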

    \begin{proof} 
        Firstly, we show that $\operatorname{Var}[W] < \infty$ for fixed $z \in \O_1$ and $X \in T^{1,0}_z \O_1$ from the assumption, where $W =  \partial_{X} \log P_2(f(z), f(\xi))$. 
        Since the conclusion holds when $f^*g_{B_2}(X,X) = 0$, we may assume that $f^*g_{B_2}(X,X) > 0$.
        Then we have
        \begin{align} \label{3.7}
            \frac{\Exp[|W|^2]}{f^*g_{B_2}(X,X)} 
            & = \int_{\O_1} \frac{|\partial_X \log P_2(f(z),f(\xi))|^2}{f^*g_{B_2}(X,X)} P_1(z,\xi) dV(\xi) \\
            & = \int_{f(\O_1)} \frac{|\partial_{X} \log P_2(f(z),\zeta)|^2}{f^*g_{B_2}(X,X)} \kappa (P_1(z,\xi) dV(\xi))(\zeta) \nonumber \\
            & = \int_{f(\O_1)} \frac{|\partial_{df(X)} \log P_2(w,\zeta)|^2}{g_{B_2}(df(X),df(X))} \kappa (P_1(z,\xi) dV(\xi))(\zeta) \nonumber \\ 
            & \le \int_{f(\O_1)} |\partial_w \log P_2 (w, \zeta)|^2_{g_{B_2}} \kappa (P_1(z,\xi) dV(\xi))(\zeta)  \nonumber \\
            & \le C \int_{f(\O_1)} \kappa (P_1(z,\xi) dV(\xi))(\zeta)  \nonumber \\
            & = C,  \nonumber
        \end{align}
        where $\kappa$ is the measure push-forward of $f$, which is defined by $\kappa(\mu)(A) = \mu(f^{-1}(A))$  for a Borel set $A \subset \O_2$ and $\mu \in \Prob(\O_1)$. Here we used the fact that 
        \begin{align*}
            \sup_{Y \in T^{1,0}\O_2} \frac{|\partial_{Y} \log P_2(w,\zeta)|^2}{g_{B_2}(Y,Y)}
            = |\partial_w \log P_2 (w, \zeta)|^2_{g_{B_2}}.
        \end{align*}
        Therefore, $\operatorname{Var}[W] = \operatorname{E}[|W|^2] - \left|\operatorname{E}[W]\right|^2 < \infty$ and hence $\operatorname{E}[W] = 0$ by Lemma~\ref{lem: Var[dlogP_2]}.

        Now, by the same argument as (\ref{3.7}), we conclude that 
        \begin{align*}
            \frac{\Var[|W|^2]}{f^*g_{B_2}(X,X)} 
            = \frac{\Exp[|W|^2]}{f^*g_{B_2}(X,X)} 
            \le C,
        \end{align*}
        which completes the proof together with Theorem~\ref{thm: applying Cauchy-Schwarz inequality}.
    \end{proof}

    Theorem~\ref{thm: Schwarz lemma for Bergman metric} is a generalization of the Schwarz Lemma for the Bergman metric by Suzuki.
    \begin{cor}[cf. \cite{suzuki1985generalized}]  \label{cor: Suzuki thm}
        Let $(\O_1, g_{B_1})$ and $(\O_2, g_{B_2})$ be bounded domains in $\CC^n$ and $\CC^m$ with Bergman metrics, respectively.
        Suppose that there exists a constant $\alpha > 0$ such that
        \begin{align*}
            \alpha |\partial_{Y} \log P_2(w,\zeta)|^2 
            \le g_{B_2}(Y,Y)  \left( 1 - \left( \frac{|\Bergman_2(w,\zeta)|^2}{\Bergman_2(w,w) \Bergman_2(\zeta,\zeta)} \right)^{\alpha}   \right)
        \end{align*}
        for all $w, \zeta \in \O_2$ and $Y \in T^{1,0}_w \O_2$. 
        Then for any holomorphic map $f:\O_1 \rightarrow \O_2$, 
        $$
            f^*g_{B_2} \le \frac{1}{\alpha} g_{B_1} .
        $$
    \end{cor}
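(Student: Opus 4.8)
The plan is to deduce the corollary directly from Theorem~\ref{thm: Schwarz lemma for Bergman metric} by verifying that the stated hypothesis implies condition (\ref{1.1}) with the constant $C = 1/\alpha$; the conclusion $f^*g_{B_2} \le \frac{1}{\alpha} g_{B_1}$ is then immediate. The crucial observation is that the extra factor appearing in Suzuki's inequality is always bounded above by $1$, so that Suzuki's condition is in fact \emph{stronger} than (\ref{1.1}). This is exactly what makes Theorem~\ref{thm: Schwarz lemma for Bergman metric} a genuine generalization of Suzuki's result.

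First I would record that, for every $w,\zeta \in \O_2$, the quantity
$$ t := \frac{|\Bergman_2(w,\zeta)|^2}{\Bergman_2(w,w)\Bergman_2(\zeta,\zeta)} $$
satisfies $0 \le t \le 1$. This is the Cauchy--Schwarz inequality applied to the two evaluation vectors $(\phi_j(w))_j$ and $(\phi_j(\zeta))_j$ associated with an orthonormal basis $\{\phi_j\}$ of $A^2(\O_2)$, precisely as in the proof of Lemma~\ref{lem: A^2 separates points}. Since $\alpha > 0$, it follows that $t^\alpha \in [0,1]$ as well, hence $1 - t^\alpha \le 1$. Substituting this bound into the hypothesis of the corollary gives, for all $w,\zeta \in \O_2$ and all $Y \in T^{1,0}_w\O_2$,
$$ \alpha\,|\p_Y \log P_2(w,\zeta)|^2 \le g_{B_2}(Y,Y)\bigl(1 - t^\alpha\bigr) \le g_{B_2}(Y,Y). $$
Dividing by $\alpha\,g_{B_2}(Y,Y) > 0$, taking the supremum over $Y$, and invoking the identity
$$ \sup_{Y \in T^{1,0}_w\O_2} \frac{|\p_Y \log P_2(w,\zeta)|^2}{g_{B_2}(Y,Y)} = |\p_w \log P_2(w,\zeta)|^2_{g_{B_2}} $$
already established in the proof of Theorem~\ref{thm: Schwarz lemma for Bergman metric}, I obtain $|\p_w \log P_2(w,\zeta)|^2_{g_{B_2}} \le 1/\alpha$ for every $w,\zeta$. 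Taking the supremum over $w,\zeta \in \O_2$ then yields condition (\ref{1.1}) with $C = 1/\alpha$.

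Finally I would apply Theorem~\ref{thm: Schwarz lemma for Bergman metric} with this constant to conclude $f^*g_{B_2} \le \frac{1}{\alpha} g_{B_1}$ for every holomorphic map $f:\O_1 \to \O_2$. There is essentially no analytic obstacle in this argument: the whole content reduces to the elementary estimate $t \le 1$ coming from Cauchy--Schwarz. The only point deserving slight care is the supremum identity relating the pointwise cometric norm of the $1$-form $\p_w \log P_2$ to the supremum over tangent vectors $Y$, but since this identity has already been recorded in the proof of Theorem~\ref{thm: Schwarz lemma for Bergman metric}, the corollary follows with almost no further work.
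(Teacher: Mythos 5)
Your proposal is correct and follows exactly the paper's route: the paper also deduces the corollary from Theorem~\ref{thm: Schwarz lemma for Bergman metric} by observing that the hypothesis forces $\sup_{w,\zeta \in \O_2} |\p_w \log P_2(w,\zeta)|^2_{g_{B_2}} \le 1/\alpha$, the only difference being that the paper leaves implicit the elementary steps you spell out (the Cauchy--Schwarz bound $t \le 1$ and the supremum identity over $Y$). No gap; your write-up is just a more detailed version of the same one-line argument.
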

    \begin{proof}
        It follows from Theorem~\ref{thm: Schwarz lemma for Bergman metric} and the assumption, which implies that
        \begin{align*}
            \sup_{w,\zeta \in \O_2} |\p_w \log P_2 (w, \zeta)|^2_{g_{B_2}} \le \frac{1}{\alpha}.
        \end{align*}
    \end{proof}
    \begin{rmk}
        Suzuki(\cite{suzuki1985generalized}) proved Corollary~\ref{cor: Suzuki thm} for a holomorphic map $f: \DD \rightarrow \O_2$ where $\DD$ is the unit disc in $\CC$, providing the sharper estimate: $f^*g_{B_2} \le \frac{1}{2 \alpha} g_{B_1}$.
        While Suzuki obtained a sharper constant, we establish the Schwarz Lemma in a far more general setting, requiring no assumptions on the domain $\O_1$.
    \end{rmk}

\vspace{5mm}

%~~~~~~~~~~~~~~~~~~~~~~~~~~~~~~~~~~~~~~~~~~~~~~~~~~~~~~~~~~~~~~~~~~~~~~~~~~~~~~~~~~~~~~~~~~~~~~~~~~~~~~~~~~~~~~~~~~~~~~~~~~~~~~~~~~~~~~~~~~~~~~~~~~~~~~~~~~~~~~~~~~~~~~~~

\section{Gradient norm of $\log P$} \label{sec: gradient norm of logP}

\subsection{Boundedness of the gradient norm}
In this subsection, we discuss the condition:
\begin{equation}\label{gradient norm bound}
    \sup_{w,\zeta \in \O_2} | \partial_w \log P_2(w,\zeta)|_{g_{B_2}}^2 < \infty,
\end{equation}
in Theorem \ref{mainthm: thm A} (Theorem \ref{thm: Schwarz lemma for Bergman metric}),
and show that this condition holds for all bounded homogeneous domains.

In \cite{gromov1991kahler}, Gromov introduced the notion of {\it K\"ahler hyperbolicity}, the existence of a $d$-bounded K\"ahler form, for vanishing
theorems concerning the $L^2$-cohomology of complete K\"ahler manifolds.
Recall that for a complete K\"ahler manifold $(M,\omega)$, we say that $\omega$ is {\it $d$-bounded} if there exists a $1$-form $\eta$ on $M$ with $d\eta = \omega$ satisfying
    $$
        \Vert \eta \Vert_{\infty} = \sup_{M} |\eta|_\omega < \infty.
    $$

For a bounded pseudoconvex domain $\Omega\subset\mathbb{C}^n$, it is natural to consider the K\"ahler hyperbolicity of the Bergman metric
$$
\omega_{B} =i\partial\overline{\partial}\log\Bergman(z,z)=d(-i\partial\log\Bergman(z,z)).
$$
In \cite{donnelly1997, donnelly1994}, Donnelly proved that the Bergman metrics of strongly pseudoconvex domains and bounded homogeneous domains are $d$-bounded. 
In these cases, in fact, he showed that the boundedness of the {\it gradient norm} of $\log\Bergman(z,z)$:
$$
\sup_{z\in\Omega}|\partial\log \Bergman(z,z)|^2_{g_{B}}<\infty.
$$
    
On the other hand, we have other potential functions for the Bergman metric, comes from the Berezin kernels. 
More precisely, for each fixed $\zeta\in\Omega_2$, we have a global potential function $- \log P_2(\cdot,\zeta)$ of the Bergman metric of $\Omega_2$ and a $1$-form $i\partial\log P_2(\cdot,\zeta)$ satisfying
$$
\omega_{B_2} =i\partial\overline{\partial}(-\log P_2(\cdot,\zeta))=d(i\partial\log P_2(\cdot,\zeta)),
$$
where $\omega_{B_2}$ is the associated K\"ahler form of ${g_{B_2}}$.
In this context, the Bergman metric is $d$-bounded provided that for some $\zeta\in\Omega_2$, the gradient norm of $\log P_2(\cdot,\zeta)$ is bounded:
$$
\sup_{w\in\Omega_2}|\partial\log P_2(w,\zeta)|^2_{g_{B_2}} \le C_\zeta<\infty.
$$
Therefore, the condition (\ref{gradient norm bound}) means that the existence of uniform upper bound for the gradient norms of the functions $\log P_2(\cdot,\zeta)$ for all $\zeta\in\Omega_2$. 

Although this condition may seem restrictive, we will show that it holds at least for all bounded homogeneous domains.
In order to establish this, we begin by investigating the relationship between the gradient norm of $\log P_2(\cdot,\zeta)$ and the Bergman representative map.
Recall
\begin{defn}\label{rep}
Let $\O$ be a bounded domain in $\mathbb{C}^n$. 
For a fixed point $z_0\in\O$, the {\it Bergman representative map} at $z_0$ is defined by 
$$
\hbox{rep}_{z_0}(\xi)=(b_1(\xi),\ldots,b_n(\xi)),
$$
and
$$
b_j(\xi):=h^{\overline{k}j}(z_0) \left\{ \left. \frac{\partial}{\partial \overline{z}_k }\right|_{z=z_0}\log \Bergman(\xi,z)- \left.\frac{\partial}{\partial \overline{z}_k}\right|_{z=z_0}\log \Bergman(z,z) \right\}.
$$
where $(h^{\bar{k} j})$ is the square root of $(g_B)^{-1}$.
\end{defn}

\begin{lem} \label{lem: dlogP = rep}
    For $z_0$ and $\xi$ in $\Omega$, 
    \begin{align*}
        |\partial \log P(z_0,\xi)|^2_{g_B} = |\operatorname{rep}_{z_0}(\xi)|^2  = \sum_{j=1}^{n} |b_j(\xi)|^2.
    \end{align*}
\end{lem}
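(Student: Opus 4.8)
The plan is to reduce the identity to a direct computation that relates the $(1,0)$-form $\p\log P(z_0,\cdot)$, where $\p$ differentiates in the first slot, to the components $b_j(\xi)$ of the representative map, and then to recognize the resulting contraction as the dual Bergman norm. The only geometric inputs are the Hermitian symmetry of the Bergman kernel and the defining property of the matrix square root $h$; everything else is bookkeeping.

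First I would expand the potential. Writing $P(z,\xi)=\Bergman(z,\xi)\,\ov{\Bergman(z,\xi)}/\Bergman(z,z)$ and differentiating in the first variable, the factor $\ov{\Bergman(z,\xi)}$ is anti-holomorphic in $z$ and hence is annihilated by $\p=\sum_j\frac{\p}{\p z_j}\,dz_j$. Setting $\alpha_j:=\frac{\p}{\p z_j}\big|_{z=z_0}\log P(z,\xi)$, this gives
$$\alpha_j=\frac{\p}{\p z_j}\Big|_{z=z_0}\log\Bergman(z,\xi)-\frac{\p}{\p z_j}\Big|_{z=z_0}\log\Bergman(z,z),$$
so that $\p\log P(z_0,\xi)=\sum_j\alpha_j\,dz_j$. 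Next I would match $\ov{\alpha_k}$ with the bracketed quantity in Definition~\ref{rep}: using $\Bergman(z,\xi)=\ov{\Bergman(\xi,z)}$ and the reality of $\Bergman(z,z)$, conjugation turns holomorphic derivatives in $z$ into anti-holomorphic ones, yielding
$$\ov{\alpha_k}=\frac{\p}{\p\ov{z}_k}\Big|_{z=z_0}\log\Bergman(\xi,z)-\frac{\p}{\p\ov{z}_k}\Big|_{z=z_0}\log\Bergman(z,z),$$
which is exactly the expression inside the braces defining $b_j(\xi)$. Hence $b_j(\xi)=\sum_k h^{\ovk j}(z_0)\,\ov{\alpha_k}$.

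Finally I would square and contract. Expanding the sum,
$$\sum_{j=1}^n|b_j(\xi)|^2=\sum_{k,l}\Big(\sum_j h^{\ovk j}(z_0)\,\ov{h^{\ov{l}j}(z_0)}\Big)\ov{\alpha_k}\,\alpha_l,$$
and invoking the defining property of the square root $h$ of $(g_B)^{-1}$, namely $\sum_j h^{\ovk j}\,\ov{h^{\ov{l}j}}=g^{\ovk l}$, this collapses to $\sum_{k,l}g^{\ovk l}\,\alpha_l\,\ov{\alpha_k}$, which is precisely the squared Bergman norm of the $(1,0)$-form $\sum_j\alpha_j\,dz_j$, i.e. $|\p\log P(z_0,\xi)|^2_{g_B}$. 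The content of the lemma is entirely this chain of identities. The only point demanding care — and the step I expect to be the main obstacle — is keeping the index and conjugation conventions consistent: one must verify that the square-root relation contracts $h$ against its conjugate to reproduce the inverse metric $g^{\ovk l}$, and that the norm of a $(1,0)$-form is computed with this same inverse matrix, so that the two contractions genuinely coincide. There is no analytic difficulty beyond the Hermitian symmetry of the Bergman kernel.
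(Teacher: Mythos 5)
Your proof is correct and takes essentially the same route as the paper: the conjugation identity you establish, $\overline{\left.\frac{\partial}{\partial z_k}\right|_{z=z_0}\log P(z,\xi)} = \left.\frac{\partial}{\partial \overline{z}_k}\right|_{z=z_0}\log\bigl(\Bergman(\xi,z)/\Bergman(z,z)\bigr)$, is precisely the ``simple observation'' on which the paper's proof rests. The remaining contraction bookkeeping, $\sum_j h^{\overline{k}j}\,\overline{h^{\overline{l}j}} = g^{\overline{k}l}$ recovering the dual Bergman norm, is left implicit in the paper, and your explicit verification of it is accurate.
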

\begin{proof}
    It follows from simple observation:
$$
\left.\frac{\partial}{\partial \overline{z}_k}\right|_{z=z_0}\log \frac{\Bergman(\xi,z)}{\Bergman(z,z)}
=
\overline{ \left.\frac{\partial}{\partial z_k}\right|_{z=z_0} \log P(z,\xi)}.
$$

\end{proof}

Therefore, the condition (\ref{gradient norm bound}) is equivalent to the existence of uniform bound for the images of the representative maps at all points $z_0\in\Omega$.

\begin{rmk}
    The image of the representative map is important in the theory of several complex variables.
    For instance, Lu Qi-Keng proved that if a bounded domain $\Omega$ admits a complete Bergman metric of constant holomorphic sectional curvature, then $\hbox{rep}_{z_0}(\Omega)=\mathbb{B}^n$ as a generalization of the Riemann mapping theorem (\cite{Lu1966onkahler}).
    However, in general, if the kernel is not zero-free, the image of the representative map need not be bounded, as in the case of an annulus (see \cite{skwarczynski1969invariant},\cite{yoo2017}).
\end{rmk}

\begin{lem} \label{lem: invariant of dlogP}
    Let $(\O_1, g_{B_1})$ and $(\O_2, g_{B_2})$ be bounded domains in $\CC^n$ with Bergman metrics, respectively. Let $\varphi: \O_1 \rightarrow \O_2$ be a biholomorphism. Then 
    \begin{align*}
        | \partial_z \log P_1(z, \xi)|^2_{g_{B_1}} = | \partial_z \log P_2(\varphi(z), \varphi(\xi))|^2_{g_{B_2}}
    \end{align*}
    for all $z, \xi \in \O_1$.
\end{lem}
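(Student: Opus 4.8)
The plan is to derive the identity from the classical transformation law of the Bergman kernel under biholomorphisms. Writing $J(z) := \det (D\varphi(z))$ for the holomorphic Jacobian determinant of $\varphi$, which is nowhere vanishing since $\varphi$ is biholomorphic, one has
\[
    \Bergman_1(z,\xi) = \Bergman_2(\varphi(z),\varphi(\xi))\, J(z)\, \overline{J(\xi)}
\]
for all $z,\xi \in \O_1$. Since $P_i(z,\xi) = |\Bergman_i(z,\xi)|^2 / \Bergman_i(z,z)$ and $\overline{\Bergman_i(z,\xi)}$ is antiholomorphic in $z$, I first record that the holomorphic derivative collapses to
\[
    \p_z \log P_i(z,\xi) = \p_z \log \Bergman_i(z,\xi) - \p_z \log \Bergman_i(z,z),
\]
where on the right the operator $\p_z$ sees only the first (holomorphic) slot of $\Bergman_i$; this is valid wherever the kernel is nonzero, and the zero locus is common to both sides by the transformation law.

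Next I would substitute the transformation law into this expression. Differentiating $\log \Bergman_1(z,\xi) = \log \Bergman_2(\varphi(z),\varphi(\xi)) + \log J(z) + \log \overline{J(\xi)}$ in the holomorphic $z$-variable, the antiholomorphic term $\log \overline{J(\xi)}$ is annihilated, while the parallel computation for $\Bergman_1(z,z)$ yields the identical $\p_z \log J(z)$ contribution. Subtracting, the Jacobian terms cancel exactly, and the chain rule produces the pointwise identity of $(1,0)$-forms on $\O_1$,
\[
    \p_z \log P_1(z,\xi) = \varphi^*\!\big(\p_w \log P_2(\,\cdot\,,\varphi(\xi))\big)\big|_{z},
\]
that is, $\p_z \log P_1(z,\xi)$ is the $\varphi$-pullback of the form $w \mapsto \p_w \log P_2(w,\varphi(\xi))$ evaluated under $w=\varphi(z)$.

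Finally, I would invoke the biholomorphic invariance of the Bergman metric, $\varphi^* g_{B_2} = g_{B_1}$, which itself follows from the same transformation law because $\log|J|^2$ is pluriharmonic and hence killed by $\p\dbar$. As $\varphi$ is then an isometry on the cotangent spaces equipped with their dual metrics, it preserves the norm of $(1,0)$-forms, and combining this with the pullback identity above gives
\[
    |\p_z \log P_1(z,\xi)|^2_{g_{B_1}} = |\p_w \log P_2(\varphi(z),\varphi(\xi))|^2_{g_{B_2}},
\]
which is the assertion.

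I expect the only delicate point to be the careful bookkeeping of holomorphic versus antiholomorphic dependence: one must verify that $\p_z$ picks up nothing from $\overline{\Bergman_i(z,\xi)}$, the conjugate factor $\overline{J(\xi)}$, or the antiholomorphic slot of $\Bergman_i(z,z)$, so that precisely the $\p_z \log J(z)$ terms remain and cancel in the difference. Once this is handled, the remainder is a routine application of the chain rule together with the standard invariance $\varphi^* g_{B_2} = g_{B_1}$.
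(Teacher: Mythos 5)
Your proposal is correct and follows essentially the same route as the paper, whose one-line proof invokes exactly the two facts you use: the transformation formula $\Bergman_1(z,\xi) = \Bergman_2(\varphi(z),\varphi(\xi)) \det(J\varphi(z)) \overline{\det(J\varphi(\xi))}$ and the invariance $\varphi^* g_{B_2} = g_{B_1}$. Your write-up simply fills in the details the paper leaves implicit --- the collapse of $\partial_z \log P_i$ to the holomorphic slots, the cancellation of the $\partial_z \log J(z)$ terms, and the isometry of dual norms on $(1,0)$-forms --- all of which are handled correctly.
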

\begin{proof}
    It follows from the transformation formula for the Bergman kernel and invariant of the Bergman metric by biholomorphisms:
    \begin{align*}
        \Bergman_1(z,\xi) = \Bergman_2(\varphi(z), \varphi(\xi)) \cdot \det(J\varphi(z)) \cdot \overline{\det(J\varphi(\xi))} \quad \text{ and } \quad
        \varphi^* g_{B_2} = g_{B_1}, 
    \end{align*}
    where $\det(J\varphi(\xi))$ is the determinant of complex Jacobian matrix of $\varphi$.
\end{proof}

\begin{cor} \label{cor: homogeneous Schwarz Lemma}
    Let $\O_1 \subset \CC^n$ be a bounded domain and $\O_2 \subset \CC^m$ be a bounded homogeneous domain. 
    Then, for any holomorphic map $f:\O_1 \rightarrow \O_2$, 
    $$
        f^*g_{B_2} \le C g_{B_1}
    $$
    for some constant $C>0$ that depends only on $\O_2$.
\end{cor}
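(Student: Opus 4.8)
The plan is to deduce the corollary from Theorem~\ref{thm: Schwarz lemma for Bergman metric} by verifying that its hypothesis (\ref{gradient norm bound}) is met by every bounded homogeneous $\O_2$. Once
$$ C := \sup_{w,\zeta\in\O_2}|\partial_w\log P_2(w,\zeta)|^2_{g_{B_2}} < \infty, $$
the inequality $f^*g_{B_2}\le C g_{B_1}$ follows immediately, with $C$ depending only on $\O_2$ as required. So the entire task is to bound this supremum.

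First I would exploit homogeneity through the biholomorphic invariance of Lemma~\ref{lem: invariant of dlogP}. Applying that lemma with $\O_1=\O_2$ and $\varphi\in\Aut(\O_2)$ shows that $(w,\zeta)\mapsto|\partial_w\log P_2(w,\zeta)|^2_{g_{B_2}}$ is invariant under the diagonal action $(w,\zeta)\mapsto(\varphi(w),\varphi(\zeta))$. Fixing a base point $w_0$ and, for each pair $(w,\zeta)$, choosing $\varphi$ with $\varphi(w)=w_0$, transitivity of $\Aut(\O_2)$ gives
$$ \sup_{w,\zeta\in\O_2}|\partial_w\log P_2(w,\zeta)|^2_{g_{B_2}} = \sup_{\zeta\in\O_2}|\partial_w\log P_2(w_0,\zeta)|^2_{g_{B_2}}, $$
so only the single base point $w_0$ remains to be treated. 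By Lemma~\ref{lem: dlogP = rep} the right-hand side is exactly $\sup_{\zeta\in\O_2}|\operatorname{rep}_{w_0}(\zeta)|^2$, i.e. the squared radius of the image of the Bergman representative map at $w_0$. Thus the corollary reduces to showing that $\operatorname{rep}_{w_0}(\O_2)$ is bounded.

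I expect this last reduction to be the crux. For an arbitrary bounded domain the representative image can be unbounded precisely when the kernel fails to be zero-free (the annulus, as noted above), so the homogeneous structure must be invoked. The Bergman kernel of a bounded homogeneous domain is zero-free, and by Donnelly's theorem (\cite{donnelly1994},\cite{donnelly1997}) its Bergman metric is $d$-bounded, with $\sup_{z\in\O_2}|\partial\log\Bergman_2(z,z)|^2_{g_{B_2}}<\infty$. Writing the components $b_j$ of $\operatorname{rep}_{w_0}$ as in Definition~\ref{rep}, the diagonal term $\frac{\partial}{\partial\bar z_k}\big|_{z=w_0}\log\Bergman_2(z,z)$ is a fixed covector controlled by Donnelly's bound, while the off-diagonal term $\frac{\partial}{\partial\bar z_k}\big|_{z=w_0}\log\Bergman_2(\zeta,z)$ is the piece that must be shown to remain bounded as $\zeta\to\partial\O_2$; this is where zero-freeness of the kernel and the transitive automorphism action enter. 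Establishing the uniform boundedness of $\operatorname{rep}_{w_0}(\O_2)$ for homogeneous domains is the main obstacle, and the remaining assembly — feeding the resulting finite constant $C$ into Theorem~\ref{thm: Schwarz lemma for Bergman metric} — is routine.
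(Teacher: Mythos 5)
Your reduction is exactly the one the paper uses: invoke Theorem~\ref{thm: Schwarz lemma for Bergman metric}, use Lemma~\ref{lem: invariant of dlogP} together with transitivity of $\Aut(\O_2)$ to collapse the supremum over $(w,\zeta)$ to a single base point, and then identify the remaining quantity with $\sup_{\zeta}|\operatorname{rep}_{w_0}(\zeta)|^2$ via Lemma~\ref{lem: dlogP = rep}. But at that point you stop: you explicitly flag the boundedness of $\operatorname{rep}_{w_0}(\O_2)$ as ``the main obstacle'' without proving it, and that obstacle \emph{is} the entire nontrivial content of the corollary. As the paper's own remark notes (the annulus example), the representative image of a bounded domain can be unbounded, so this step cannot be waved through. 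The paper closes it by structure theory: realize $\O_2$ biholomorphically as a \emph{normal Siegel domain} $D$ (\cite{xu2005theory}, Theorems~3.19 and~9.29), for which there is a distinguished point $p\in D$ whose representative image $\operatorname{rep}_p(D)$ is known to be bounded (\cite{xu2005theory}, Theorem~4.7); homogeneity then moves an arbitrary base point to $p$, giving the uniform bound $\operatorname{diam}^2(\operatorname{rep}_p(D))$. Your proposal contains no substitute for this input, so as written it is an unproved reduction, not a proof.

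Moreover, the tools you gesture at would not fill the gap. Donnelly's $d$-boundedness controls $\sup_{z}|\partial\log\Bergman_2(z,z)|^2_{g_{B_2}}$, but after your reduction the base point $w_0$ is \emph{fixed}, so the diagonal term in Definition~\ref{rep} is a single finite covector and needs no theorem at all; the difficulty sits entirely in the off-diagonal term $\frac{\partial}{\partial\bar z_k}\big|_{z=w_0}\log\Bergman_2(\zeta,z)$ as $\zeta\to\partial\O_2$, which Donnelly's diagonal estimate says nothing about. Your claim that the Bergman kernel of a bounded homogeneous domain is zero-free is also unsubstantiated (it is standard for bounded \emph{symmetric} domains, but you cite nothing covering the general homogeneous case), and even granted zero-freeness, it would only make $\log\Bergman_2(\zeta,w_0)$ well defined, not bounded: boundedness of the representative image requires the quantitative Siegel-domain description of the kernel, which is precisely what \cite{xu2005theory} supplies and what your sketch lacks.
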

\begin{proof}
    By Theorem~\ref{thm: Schwarz lemma for Bergman metric}, it is enough to show that 
    $$ \sup_{w, \zeta \in \O_2} | \partial_w \log P_2(w, \zeta)|^2_{g_{B_2}} $$
    is bounded. 
    
    Denote $F_{\O_2}(w, \zeta) := | \partial_w \log P_2(w, \zeta)|^2_{g_{B_2}}$.
    Let $\tau \colon \Omega_2 \to D$ be a biholomorphism from $\Omega_2$ onto a normal Siegel domain $D$ (\cite[Theorem 3.19 and Theorem 9.29]{xu2005theory}). 
    Then there exists a point $p \in D$ such that the representative image $\operatorname{rep}_p(D)$ of $D$ is bounded (see \cite[Theorem 4.7]{xu2005theory} for details).
    Since $D$ is a homogeneous domain, for a point $w \in \O_2$, there exists a biholomorphism $\varphi: D \rightarrow D$ such that $\varphi(\tau(w)) = p$.   
    Then, by Lemma~\ref{lem: invariant of dlogP},
    \begin{align*}
        F_{\O_2}(w, \zeta)  
        =& F_{\tau(\O_2)}( \tau(w), \tau(\zeta)) 
        = F_{D}(p, (\varphi \circ \tau)(\zeta)) \\
        =& |\hbox{rep}_p( (\varphi \circ \tau)(\zeta) )|^2 \le \hbox{diam}^2(\hbox{rep}_{p}(D)) < \infty,
    \end{align*}
    where the last equality comes from Lemma~\ref{lem: dlogP = rep}. 
    Therefore, $\sup_{w, \zeta \in \O_2} F_{\O_2}(w, \zeta)$ is bounded.
\end{proof}

\subsection{Constant gradient norm}
Up to this point, we have focused solely on the boundedness of the gradient norm; however, it is also important to find a `special' potential function of a K\"ahler metric whose gradient norm is {\it constant}.
In \cite{kai2007note}, Kai and Ohsawa proved that, for a bounded homogeneous domain, the potential function obtained from the Bergman kernel of a corresponding homogeneous Siegel domain has constant gradient norm.

Conversely, the existence of a potential function with constant gradient norm may be used to characterize the underlying domain (manifold).
For instance, Choi, Lee, and Seo recently established a characterization of the unit ball by the constant gradient norm of a K\"ahler-Einstein potential.

\begin{thm}[\cite{choi2023characterization}]
    Let $M$ be a simply connected complex manifold of dimension $n$ which covers a compact manifold and assume that $M$ admits a complete K\"ahler-Einstein metric $\omega$ with negative Ricci curvature $-K$. Suppose that there is a global potential function $\varphi: M \to \RR$ of $\omega$ such that
    $$
        \vert \partial \varphi \vert_{\omega}^2 = \frac{n+1}{K}.
    $$
    Then $M$ is biholomorphic to the unit ball in $\CC^n$.
\end{thm}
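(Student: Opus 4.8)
The plan is to show that the hypotheses force $(M,\omega)$ to be a complete, simply connected K\"ahler manifold of \emph{constant negative holomorphic sectional curvature}, after which $M$ is biholomorphic to $\Bn$ by the classification of complex space forms (Hawley--Igusa). Write $c:=\frac{n+1}{K}$. Since $\omega=\IM\p\dbar\varphi$ globally, I first introduce the complex gradient field $Z=\sum_i\big(g^{i\ovj}\varphi_{\ovj}\big)\frac{\p}{\p z_i}$. A direct K\"ahler computation using only $g_{i\ovj}=\varphi_{i\ovj}$ gives the fundamental identity $\nabla_j Z^i=\delta^i_j$ (the two Christoffel contributions cancel), valid for \emph{any} potential of $\omega$. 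Pairing with $\p\varphi$ then yields $|Z|^2_\omega=|\p\varphi|^2_\omega=c$, so $Z$ has constant length; in particular the integral curves of $\mathrm{Re}\,Z=\tfrac12\,\mathrm{grad}_\omega\varphi$ are geodesics and $\varphi$ is affine along them, i.e. $\varphi$ is a transnormal (Busemann--type) function. Differentiating $|\p\varphi|^2_\omega=c$ records a single scalar relation tying the $(2,0)$-Hessian $\nabla_i\nabla_j\varphi$ to $\p\varphi$.

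Next I extract the curvature in the distinguished direction. Covariantly differentiating $\nabla_k Z^i=\delta^i_k$ once more and commuting the derivatives produces the full curvature tensor contracted against $Z$; the K\"ahler--Einstein identity $\Ric=-K\omega$ evaluates the Ricci contraction, and the constant-norm relation eliminates the residual Hessian terms. The outcome is that the holomorphic sectional curvature in the direction $Z$ equals the universal constant $-\tfrac{2K}{n+1}=-2/c$, depending only on $n,K,c$ (this is forced by the space-form consistency $\Ric=\tfrac{n+1}{2}h\,\omega$ for constant holomorphic sectional curvature $h$). Geometrically this is transparent from the equivalent observation that $u:=e^{-\varphi/c}$ solves the homogeneous complex Monge--Amp\`ere equation $(\IM\p\dbar u)^n=0$ with $\IM\p\dbar u\le 0$ of rank exactly $n-1$, its one-dimensional kernel being spanned by $Z$; the associated Monge--Amp\`ere foliation is by holomorphic curves of curvature $-2/c$, which is the source of the identity.

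The value $-2/c$ is, so far, controlled only in the single direction $Z$ at each point, and the crux of the argument is to promote this to \emph{constant} holomorphic sectional curvature in every direction. This is exactly where the assumption that $M$ covers a compact manifold is used: the deck group $\Gamma$ acts by isometries of the (canonical, hence $\Gamma$-invariant) K\"ahler--Einstein metric $\omega$, so for each $\gamma\in\Gamma$ the pullback $\gamma^*\varphi$ is again a global potential with the same constant gradient norm $c$, and the computation above applies verbatim to give holomorphic sectional curvature $-2/c$ in the direction $Z_\gamma:=\mathrm{grad}^{1,0}(\gamma^*\varphi)$. I would then show that cocompactness makes the directions $\{[Z_\gamma(p)]\}_{\gamma\in\Gamma}$ dense in $\PP(T^{1,0}_pM)$ for every $p$ (heuristically, the Busemann directions attached to the $\Gamma$-orbit of the distinguished boundary point sweep out the entire sphere of directions); by continuity the holomorphic sectional curvature is then identically $-2/c$ in all directions and at all points. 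With $M$ complete, simply connected and of constant negative holomorphic sectional curvature, Hawley--Igusa yields $M\cong\Bn$.

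The main obstacle is precisely this density/spreading step: converting curvature control in one distinguished direction into full isotropy of the curvature tensor. This is the genuine rigidity and the only point at which completeness and cocompactness are indispensable, whereas the curvature commutation of the second step is laborious but routine, and the identity $\nabla_j Z^i=\delta^i_j$ together with the Monge--Amp\`ere reformulation makes the directional curvature value $-2/c$ essentially automatic.
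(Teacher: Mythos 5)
First, a framing remark: the paper you are being checked against does not prove this theorem at all. It is imported verbatim by citation from \cite{choi2023characterization} as motivation for Section~4.2, so there is no in-paper proof to compare with; your attempt can only be judged against the cited reference and on its own correctness. On its own terms, the first half of your outline is sound: for $Z^i = g^{i\ovj}\varphi_{\ovj}$ the identity $\nabla_j Z^i = \delta^i_j$ does hold for any K\"ahler potential, constancy of $|Z|_\omega$ does make the flow lines of $\operatorname{Re} Z$ geodesics, and the commutation you describe really does give holomorphic sectional curvature $-2/c$ in the direction $Z$. But note what this computation actually shows: it gives $H(Z) = -2/c$ for \emph{every} potential of \emph{every} K\"ahler metric with constant gradient norm $c$, using neither the Einstein condition nor the specific value $c = (n+1)/K$. (Your appeal to ``space-form consistency'' to fix the value is circular, but the direct computation delivers it anyway; one can verify it on the polydisc, where the paper's second example has $c = 2n$ and the curvature in the gradient direction is indeed $-1/n = -2/c$.)

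That universality is precisely what kills your second half. The polydisc $\Delta^n$, $n \ge 2$, is simply connected, covers compact manifolds, its Bergman metric is complete K\"ahler--Einstein with $\Ric = -\omega$ (so $K=1$), and by the paper's own example it carries global potentials with constant gradient norm $c = 2n$. Every ingredient of your promotion step --- deck translates $\gamma^*\varphi$ with the same constant norm, directional curvature $-2/c$ along each $Z_\gamma$, cocompactness --- is available verbatim there; the only hypothesis of the theorem that fails is the numerical value $c = (n+1)/K$, which your density argument never uses. Since $\Delta^n$ does not have constant holomorphic sectional curvature, your claimed density of $\{[Z_\gamma(p)]\}$ in $\PP(T^{1,0}_pM)$ must fail for it, and one can see the failure concretely: each translated gradient splits as $(Z_1,\dots,Z_n)$ with all $|Z_j|$ equal, so these directions lie in the proper, nowhere dense torus $\{|Z_1| = \cdots = |Z_n|\} \subset \PP(T^{1,0}_p M)$ and never approach the axis directions. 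So the density step is not merely the hard unproven part of your plan; as formulated it is false, and it cannot be repaired without injecting the constant $(n+1)/K$ into the isotropy argument itself. A mechanism that does use it, and is pointwise rather than asymptotic, is the following: differentiating $|\p\varphi|^2_\omega = c$ once gives $\langle \nabla_{\ov{Z}} Z, Z\rangle = -c$, whence $\Vert \dbar Z \Vert^2 \ge 1$ by Cauchy--Schwarz; differentiating twice and tracing against $\Ric = -K\omega$ gives $\Vert \dbar Z \Vert^2 = Kc - n$, which equals $1$ exactly when $c = (n+1)/K$ (on the polydisc it equals $n$, consistent with non-rigidity). The forced equality case of Cauchy--Schwarz yields $\dbar Z = -\tfrac{1}{c}\, Z \otimes \dbar\varphi$, i.e.\ $e^{\varphi/c} Z$ is a \emph{holomorphic} vector field --- on $\Bn$ it is the parabolic field attached to the boundary point in your Busemann picture --- and the rigidity then comes from this field and its Monge--Amp\`ere foliation, with the covering-compactness needed elsewhere (for instance, your parenthetical claim that $\omega$ is $\Gamma$-invariant is itself a nontrivial uniqueness assertion for complete negative K\"ahler--Einstein metrics, and completeness/cocompactness enter there and in globalizing the foliation), not for any sweeping of directions.
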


For a bounded homogeneous domain $\Omega$, it is well-known that the Bergman metric is K\"ahler-Einstein with negative Ricci curvature. In \cite{choi2025}, it is shown that the gradient norm of every (local) potential of the Bergman metric is $rc_\O$ whenever its gradient norm is constant, where $r$ and $c_{\O}$ are the rank and the genus of $\O$, respectively. For instance, one can find global potentials $\varphi_{\BB^n}$ and $\varphi_{\Delta^n}$ with $\vert \partial \varphi_{\BB^n} \vert^2_{g_B} \equiv n+1$ and $\vert \partial \varphi_{\Delta^n} \vert^2_{g_B} \equiv 2n$.
As a direct consequence, if $\O$ is a bounded symmetric domain in $\CC^2$, it admits a global potential with constant gradient norm. There are only two possible values, $3$ and $4$, which distinguish whether $\O$ is biholomorphic to $\BB^2$ or $\DD^2$.\\

Special attention has been paid to the problem of explicitly finding a potential function of a Kähler metric with constant gradient norm.
In this context, the density function $P(z,\xi)$ of the Bergman statistical model may serve as a candidate for such a potential function, provided that $\xi$ is chosen appropriately.
We expect that for some Shilov boundary point $\xi_0 \in\partial\Omega$, one can take
$$
\lim_{\xi\rightarrow \xi_0}(-\log P(\cdot,\xi))
$$
as a potential function of the Bergman metric with constant gradient norm, assuming that the off-diagonal Bergman kernel extends smoothly to the boundary.
The following two examples illustrate this phenomenon.

\begin{ex}
    Let $\BB^n$ be the unit ball in $\CC^n$. Then 
    $$\vert \partial \log P(z,\xi_0) \vert^2_{g_B} \equiv n+1$$ 
    whenever $\xi_0 \in \partial\BB^{n}$.
\end{ex}

\begin{proof}
    On $\BB^{n}$, the Bergman kernel $\Bergman$ and the Bergman metric $g_B = (g_{i\bar{j}})$ are given by
    $$
        \Bergman(z,\xi) = \frac{n!}{\pi^n} \frac{1}{(1 - z \cdot \bar \xi)^{n+1}}
    $$
    and
    $$
        g_{i \bar{j}} (z) = \frac{n+1}{(1-|z|^2)^2} ((1-|z|^2)\delta_{ij} + \bar z_{i} z_j),
    $$
    respectively. Let $(g^{\bar{j} k})$ be the inverse of $g_B$ with $g_{i \bar{j}} g^{\bar{j} k} = \delta_{ik}$;
    $$
        g^{\bar{j} k} (z) = \frac{1-|z|^2}{n+1} (\delta_{jk} - \bar z_{j} z_k).
    $$
    On the other hand, we have
    \begin{align*}
        \log P(z,\xi)    &= \log \left( \frac{|\Bergman(z,\xi)|^2}{\Bergman(z,z)} \right) \\
                    &= \log\left(\frac{n!}{\pi^n}\right) + (n+1)\left\{ \log (1-|z|^2) - \log (1-z\cdot \bar \xi)(1- \bar z \cdot \xi)\right\}.
    \end{align*}
    Its partial derivatives with respect to $z$ are given by
    $$
        \partial_i \log P(z,\xi) = -(n+1) \left( \frac{\bar z_i}{1-|z|^2} - \frac{\bar \xi_i}{1- z \cdot \bar \xi} \right).
    $$
    Therefore, 
    \begin{align*}
        \vert \partial \log P(z,\xi) \vert^2_{g_B} 
        =& g^{j \bar{k}}(z) (\partial_j \log P(z,\xi)) (\overline{\partial_k \log P(z,\xi)}) \\
        =& (n+1) |z|^2
        + (n+1) (1-|z|^2) \frac{|\xi|^2 - z \cdot \bar \xi - \bar z \cdot \xi + (z \cdot \bar \xi)(\bar z \cdot \xi)}{(1- z \cdot \bar \xi)(1- \bar z \cdot \xi)} 
    \end{align*}
    If $\xi_0$ lies on the boundary of $\BB^{n}$, then we have
    $$
        \frac{|\xi_0|^2 - z \cdot \bar \xi_0 - \bar z \cdot \xi_0 + (z \cdot \bar \xi_0)(\bar z \cdot \xi_0)}{(1- z \cdot \bar \xi_0)(1- \bar z \cdot \xi_0)}
        = 1,
    $$
    which yields $\vert \partial \log P(z,\xi_0) \vert^2_{g_B} = n+1$.
\end{proof}

\begin{ex}
    Let ${\Delta}^n$ be the unit polydisc in $\CC^n$.
    Then
    $$ \vert \partial \log P(z,\xi_0) \vert^2_{g_B} \equiv 2n $$
    whenever $\xi_0 \in (\partial {\Delta})^n$.
\end{ex}
\begin{proof}
    Since the Bergman kernel $\Bergman(x,\xi)$ is the product of $n$ copies of the Bergman kernel of $\Delta$ with $n$ independent variables, the Bergman metric is given by a diagonal matrix. This implies that the gradient norm of $\log P(z,\xi)$ is obtained from multiplying the one-dimensional result by $n$.
\end{proof}

\bibliographystyle{abbrv}
\bibliography{reference}

\end{document}